\newcommand{\subj}[1]{\par\noindent{\bf Mathematics Subject Classification 2010: }#1.}
\newcommand{\keyw}[1]{\par\noindent{\bf Keywords: }#1.}
\theoremstyle{definition}
\newtheorem{definition}{Definition}
\newtheorem{theorem}{Theorem}
\newtheorem{lemma}{Lemma}
\theoremstyle{remark}
\def\a{\alpha}
\def\t{\tau}
\def\p{\psi}
\def\f{{f^{[n]}_\p}}
\def\LD{{^CD_{a+}^{\a,\p}}}
\def\RD{{^CD_{b-}^{\a,\p}}}
\def\RLD{{D_{a+}^{\a,\p}}}
\def\RRD{{D_{b-}^{\a,\p}}}
\def\LI{{I_{a+}^{\a,\p}}}
\def\RI{{I_{b-}^{\a,\p}}}
\begin{document}

\title{A Caputo fractional derivative of a function with respect to another function}

\author{Ricardo Almeida\\
{\tt ricardo.almeida@ua.pt}}

\date{Center for Research and Development in Mathematics and Applications (CIDMA)\\
Department of Mathematics, University of Aveiro, 3810--193 Aveiro, Portugal}

\maketitle


\begin{abstract}
In this paper we consider a Caputo type fractional derivative with respect to another function. Some properties, like the semigroup law, a relationship between the fractional derivative and the fractional integral, Taylor's Theorem, Fermat's Theorem, etc, are studied.
Also, a numerical method to deal with such operators, consisting in approximating the fractional derivative by a sum that depends on the first-order derivative, is presented.
Relying on examples, we show the efficiency and applicability of the method. Finally, an application of the fractional derivative, by considering a Population Growth Model, and showing that we can model more accurately the process using different kernels for the fractional operator is provided.
\end{abstract}

\subj{26A33, 34A08, 65L05, 34K60}

\keyw{Fractional calculus, semigroup law, numerical methods, population growth model}


\section{Introduction}

In ordinary calculus, higher-order derivatives $D^nf=d^nf/dx^n$, with the convention $D^0f=f$, and n-fold integrals $I^n=\int\ldots\int f \,dx$, are considered only for the particular case when $n\in\mathbb N$. The obvious extension is to try to present new definitions for derivatives and integrals with arbitrary real order $\a>0$, in which the ordinary definitions are just particular cases. This subject is known as fractional calculus, and its birth happened in 1695, when L'Hospital asked Leibniz what could be the meaning of $d^{1/2}/dx^{1/2}$. For more than two centuries, this subject was relevant only in pure mathematics, and  Euler, Fourier, Abel, Liouville, Riemann, Hadamard, among others, have studied these new fractional operators, by presenting new definitions and studying their most important properties. However, in the past decades, this subject has proven its applicability in many and different natural situations, such as viscoelasticity \cite{Fang,Sapora}, anomalous diffusion \cite{Gomez,Magin}, stochastic processes \cite{Corlay,Nguyen}, signal and image processing \cite{Yu}, fractional models and control \cite{Pooseh3,Zeng}, etc.

This is a very rich field, and for it we find several definitions for fractional integrals and for fractional derivatives \cite{Kilbas,Samko}. Just to mention a few, for fractional integrals, given an integrable function $f:[a,b]\to\mathbb R$ and a positive real number $\alpha$, we have the following
\begin{align*}
\mbox{Riemann--Liouville:}& \quad  {^{RL}I_{a+}^{\a}}f(x):=\frac{1}{\Gamma(\a)}\int_a^x(x-t)^{\a-1}f(t)\,dt,\\
\mbox{Hadamard:}& \quad  {^HI_{a+}^{\a}}f(x):=\frac{1}{\Gamma(\a)}\int_a^x\left(\ln\frac{x}{t}\right)^{\a-1}\frac{f(t)}{t}\,dt,\\
\mbox{Erd\'{e}lyi--Kober:}& \quad  {^{EK}{I}_{a+,\sigma,\eta}^{\a}}f(x):=\frac{\sigma x^{-\sigma(\a+\eta)}}{\Gamma(\a)}\int_a^xt^{\sigma\eta+\sigma-1}(x^\sigma-t^\sigma)^{\a-1}f(t)\,dt, \quad \sigma>0, \eta\in\mathbb R.\\
\end{align*}
Fractional derivatives are defined using fractional integrals, and usually are in the form $D^\a=D^n I^{n-\a}$, where $I$ can be any of the previous fractional integrals and $D^n$ is some form of differential operator. For example, if we denote $n=[\a]+1\in\mathbb N$, we have the following definitions
\begin{align*}
\mbox{Riemann--Liouville:}& \quad  {^{RL}D_{a+}^{\a}}f(x):=\left(\frac{d}{dx}\right)^n {^{RL}I_{a+}^{n-\a}}f(x),\\
\mbox{Hadamard:}& \quad  {^H{D}_{a+}^{\a}}f(x):=\left(x\frac{d}{dx}\right)^n {^H{I}_{a+}^{n-\a}}f(x),\\
\mbox{Erd\'{e}lyi--Kober:}& \quad  {^{EK}{D}_{a+,\sigma,\eta}^{\a}}f(x):=x^{-\sigma \eta}\left(\frac{1}{\sigma x^{\sigma-1}}\frac{d}{dx}\right)^n x^{\sigma (n+\eta)} {^{EK}{I}_{a+,\sigma,\eta+\a}^{n-\a}}f(x).\\
\end{align*}

To overcome the vast number of definitions, we can for instance consider general operators, from which choosing special kernels and some form of differential operator, we obtain the classical fractional integrals and derivatives \cite{Agrawal1,Klimek,Malinowska1}. For example, for the kernel $k(x,t)=x-t$ and the differential operator $d/dx$, we obtain the Riemann--Liouville fractional derivative, and for  $k(x,t)=\ln(x/t)$ and the differential $x \, d/dx$, we obtain the Hadamard fractional derivative. But, in this case, most of the fundamental laws of the derivative operator can not be obtained, due to the arbitrariness of the kernel. The problems that arise from this approach are the natural limitations to the study of the basic properties of the fractional operators. To overcome this issue, another approach is to consider the special case when the kernel is of the type $k(x,t)=\p(x)-\p(t)$ and the derivative operator is of the form $1/\p'(x) \, d/dx$. Although the kernel is still unknown, involving the function $\p$, we can already deduce some properties for the fractional operator, as we shall see in this work.

Let  $\a>0$,  $I=[a,b]$ be a finite or infinite interval, $f$  an integrable function defined on  $I$ and $\p\in C^1(I)$ an increasing function such that $\p'(x)\not=0$, for all $x\in I$.
Fractional integrals and fractional derivatives of a function $f$ with respect to another function $\p$ are defined as \cite{Kilbas,Osler,Samko}
$$\LI f(x):=\frac{1}{\Gamma(\a)}\int_a^x \p'(t)(\p(x)-\p(t))^{\a-1}f(t)\,dt,$$
and
\begin{align*} \RLD f(x):= & \left(\frac{1}{\p'(x)}\frac{d}{dx}\right)^n {I_{a+}^{n-\a,\p}}f(x)\\
=&\frac{1}{\Gamma(n-\a)}\left(\frac{1}{\p'(x)}\frac{d}{dx}\right)^n\int_a^x \p'(t)(\p(x)-\p(t))^{n-\a-1}f(t)\,dt,\end{align*}
respectively, where $n=[\a]+1$. If we consider $\p(x)=x$ or $\p(x)=\ln x$, we obtain the Riemann--Liouville and Hadamard fractional operators. If we take $\p(x)=x^\sigma$, then
$${^{EK}{I}_{a+,\sigma,\eta}^{\a}}f(x)=x^{-\sigma(\a+\eta)} \LI(x^{\sigma\eta}f(x)) \quad \mbox{and} \quad
{^{EK}{D}_{a+,\sigma,\eta}^{\a}}f(x)=x^{-\sigma\eta} \LD(x^{\sigma(\eta+\a)}f(x)).$$
Similar formulas can be presented for the right fractional integral and right fractional derivative:
$$\RI f(x):=\frac{1}{\Gamma(\a)}\int_x^b \p'(t)(\p(t)-\p(x))^{\a-1}f(t)\,dt,$$
and
\begin{align*} \RRD f(x):= & \left(-\frac{1}{\p'(x)}\frac{d}{dx}\right)^n {I_{b-}^{n-\a,\p}}f(x)\\
=&\frac{1}{\Gamma(n-\a)}\left(-\frac{1}{\p'(x)}\frac{d}{dx}\right)^n\int_x^b \p'(t)(\p(t)-\p(x))^{n-\a-1}f(t)\,dt.\end{align*}

The following semigroup property is valid for fractional integrals: if $\a,\beta>0$, then
$$\LI {I_{a+}^{\beta,\p}}f(x)={I_{a+}^{\a+\beta,\p}}f(x) \quad \mbox{and} \quad \RI {I_{b-}^{\beta,\p}}f(x)={I_{b-}^{\a+\beta,\p}}f(x).$$

We suggest \cite{Agrawal2}, where some generalizations of fractional integrals and derivatives of a function with respect to another function, involving a weight function, are presented.

The goal of this paper is to present and study some properties of a Caputo fractional derivative with respect to another function. The idea is to combine the definition of Caputo fractional derivative with the Riemann--Liouville fractional derivative with respect to another function. With this new definition, we generalize some previous works dealing with the Caputo and Caputo--Hadamard fractional derivatives. We study the main properties of this operator. To mention a few, we establish a relation between fractional integration and fractional differentiation, we study some semigroup laws, and we present an integration by parts formula, which is crucial when dealing with optimal control problems.  Since only few analytical tools are available, dealing with such operators is, in many situations, very difficult. To overcome this issue, one presents here a numerical method consisting in approximating the fractional derivative by a sum involving integer-order derivatives only. At the end, we show the importance of considering such general forms of fractional operators. Starting with the exponential growth model, we consider later the same problem described by a fractional differential equation (FDE), and we shall see that the choice of the kernel determines the accuracy of the model.

The paper is organized in the following way. In Section \ref{sec: definition}, we present the main definition, the Caputo fractional derivative of a function with respect to another function. Some fundamental properties are studied; we prove that the operators are bounded and we establish a relation between this operator and the Riemann--Liouville fractional derivative with respect to another function. In Section \ref{sec:examples}, we compute the fractional derivative of a power function and of the Mittag-Leffler function. This new fractional derivative is the inverse operation of the fractional integral operator, as it can be seen in Section \ref{sec:relation}. Then, in Section \ref{sec:semigroup}, we obtain several semigroup laws for the fractional derivative. Some results, like an integration by parts formula, a Fermat's Theorem and a Taylor's Theorem, are proven in Section \ref{sec:Miscelious}. To deal numerically with such operators, in Section \ref{sec:numerical} we prove a decomposition formula for the Caputo fractional derivative, involving the first-order derivative of the function only. This allows us later to apply standard techniques to solve the problems. In Section \ref{sec:popul}, we apply the theory to study a population growth law, by showing that, if we consider fractional derivatives with the kernel depending on another function, we may describe more efficiently the dynamics of the model.

\section{Caputo-type fractional derivative}
\label{sec: definition}

In his 1967 paper \cite{Caputo}, Caputo reformulated the definition of the Riemann--Liouville fractional derivative, by switching the order of the ordinary derivative with the fractional integral operator. By doing so, the Laplace transform of this new derivative depends on integer order initial conditions, differently from the initial conditions when we use the Riemann--Liouville fractional derivative, which involve fractional order conditions. Motivated by this concept, we present the following definition.

\begin{definition} Let  $\a>0$,  $n\in\mathbb N$, $I$ is the interval $-\infty\leq a < b \leq \infty$, $f,\p\in C^n(I)$ two functions such that $\p$ is increasing and $\p'(x)\not=0$, for all $x\in I$. The left $\p$-Caputo fractional derivative of $f$ of order $\a$ is given by
$$\LD f(x):=   {I_{a+}^{n-\a,\p}}\left(\frac{1}{\p'(x)}\frac{d}{dx}\right)^nf(x),$$
and the right $\p$-Caputo fractional derivative of $f$  by
$$\RD f(x):=   {I_{b-}^{n-\a,\p}}\left(-\frac{1}{\p'(x)}\frac{d}{dx}\right)^nf(x),$$
where
$$n=[\a]+1 \, \mbox{ for } \, \a\notin\mathbb N, \quad n=\a\, \mbox{ for } \, \a\in\mathbb N.$$
\end{definition}

To simplify notation, we will use the abbreviated symbol
$$f^{[n]}_\p f(x):= \left(\frac{1}{\p'(x)}\frac{d}{dx}\right)^nf(x).$$
From the definition, it is clear that, given $\alpha=m\in\mathbb N$,
$$\LD f(x)=f^{[m]}_\p (x) \quad \mbox{and} \quad \RD f(x)=(-1)^mf^{[m]}_\p (x), $$
and if $\a\notin\mathbb N$, then
\begin{equation}\label{def:LD}\LD f(x)=\frac{1}{\Gamma(n-\a)}\int_a^x \p'(t)(\p(x)-\p(t))^{n-\a-1}f^{[n]}_\p (t)\,dt\end{equation}
and
$$\RD f(x)=\frac{1}{\Gamma(n-\a)}\int_x^b \p'(t)(\p(t)-\p(x))^{n-\a-1}(-1)^n f^{[n]}_\p (t)\,dt.$$
In particular, when $\a\in(0,1)$, we have
$$\LD f(x)=\frac{1}{\Gamma(1-\a)}\int_a^x (\p(x)-\p(t))^{-\a}f'(t)\,dt,$$
and
$$\RD f(x)=\frac{-1}{\Gamma(1-\a)}\int_x^b(\p(t)-\p(x))^{-\a}f'(t)\,dt.$$
For some special cases of $\p$, we obtain the Caputo fractional derivative \cite{Samko}, the Caputo--Hadamard fractional derivative \cite{Baleanu1,Baleanu2} and the Caputo--Erd\'{e}lyi--Kober fractional derivative \cite{Luchko}.
From now on, we will restrict to the case $\a\notin\mathbb N$, and we study some features of this $\p$-Caputo type fractional derivative. Also, to be concise, we will prove the results only for the left fractional derivative, since the methods are similar for the right fractional derivatives, doing the necessary adjustments.

\begin{theorem}\label{alternativeFormula} Suppose that $f,\p\in C^{n+1}[a,b]$. Then, for all $\a>0$,
$$\LD f(x)=\frac{(\p(x)-\p(a))^{n-\a}}{\Gamma(n+1-\a)}f^{[n]}_\p(a)+\frac{1}{\Gamma(n+1-\a)}\int_a^x(\p(x)-\p(t))^{n-\a}\frac{d}{dt}f^{[n]}_\p (t)\,dt$$
and
$$\RD f(x)=(-1)^n\frac{(\p(b)-\p(x))^{n-\a}}{\Gamma(n+1-\a)}f^{[n]}_\p(b)-\frac{1}{\Gamma(n+1-\a)}\int_x^b(\p(t)-\p(x))^{n-\a}(-1)^n\frac{d}{dt}f^{[n]}_\p (t)\,dt.$$
\end{theorem}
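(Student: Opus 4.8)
The plan is to start from the integral formula \eqref{def:LD} for $\LD f(x)$ when $\a\notin\mathbb N$ and integrate by parts, differentiating the factor $\p'(t)(\p(x)-\p(t))^{n-\a-1}$ to an antiderivative and leaving $f^{[n]}_\p(t)$ intact. The key observation is that, treating $x$ as fixed, one has
$$\frac{d}{dt}\left[-\frac{(\p(x)-\p(t))^{n-\a}}{n-\a}\right]=\p'(t)(\p(x)-\p(t))^{n-\a-1},$$
so $-(\p(x)-\p(t))^{n-\a}/(n-\a)$ is exactly the antiderivative we need. First I would substitute this into the integration-by-parts formula $\int_a^x u'v = [uv]_a^x - \int_a^x uv'$ with $v(t)=f^{[n]}_\p(t)$, obtaining a boundary term together with $\frac{1}{\Gamma(n-\a)(n-\a)}\int_a^x(\p(x)-\p(t))^{n-\a}\frac{d}{dt}f^{[n]}_\p(t)\,dt$; using $\Gamma(n+1-\a)=(n-\a)\Gamma(n-\a)$ turns the constant into $1/\Gamma(n+1-\a)$, matching the claimed integral term.

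Next I would evaluate the boundary term $\left[-\frac{(\p(x)-\p(t))^{n-\a}}{(n-\a)\Gamma(n-\a)}f^{[n]}_\p(t)\right]_{t=a}^{t=x}$. At $t=x$ the factor $(\p(x)-\p(t))^{n-\a}$ vanishes, since $n-\a>0$ (as $n=[\a]+1>\a$), so only the lower endpoint survives, contributing $+\frac{(\p(x)-\p(a))^{n-\a}}{\Gamma(n+1-\a)}f^{[n]}_\p(a)$, which is precisely the first term in the statement. This handles the case $\a\notin\mathbb N$. For the case $\a=m\in\mathbb N$, where by definition $\LD f(x)=f^{[m]}_\p(x)$ and $n=m$, I would check the formula directly: the right-hand side becomes $f^{[m]}_\p(a)+\int_a^x \frac{d}{dt}f^{[m]}_\p(t)\,dt$ (since $(\p(x)-\p(t))^{0}=1$ and $\Gamma(1)=1$), which equals $f^{[m]}_\p(x)$ by the fundamental theorem of calculus — so the formula holds for all $\a>0$ as claimed, and similarly for $\RD$.

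The steps are all routine, so the only real point requiring care is justifying the integration by parts and the vanishing of the boundary term at $t=x$: one needs $f^{[n]}_\p$ to be $C^1$ (guaranteed by the hypothesis $f,\p\in C^{n+1}[a,b]$, which makes $\frac{d}{dt}f^{[n]}_\p$ continuous), and one must confirm that $(\p(x)-\p(t))^{n-\a}\to 0$ as $t\to x^-$, which is immediate from continuity of $\p$ and $n-\a>0$. No improper-integral subtleties arise here because the exponent $n-\a$ is strictly positive, unlike in the original formula \eqref{def:LD} where the exponent $n-\a-1$ can be negative; this is in fact the whole point of the alternative formula, since it trades a weakly singular kernel for a continuous one. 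The right-derivative identity follows by the same computation with the obvious sign changes coming from the $(-1)^n$ and the reversed orientation of the interval.
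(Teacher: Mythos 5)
Your proposal is correct and follows essentially the same route as the paper, which proves the result by integrating Formula \eqref{def:LD} by parts with $u'(t)=\p'(t)(\p(x)-\p(t))^{n-\a-1}$ and $v(t)=f^{[n]}_\p(t)$; your added care about the vanishing boundary term at $t=x$, the identity $\Gamma(n+1-\a)=(n-\a)\Gamma(n-\a)$, and the (trivial) integer case only makes explicit what the paper leaves implicit.
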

\begin{proof} Integrating by parts Formula \eqref{def:LD}, with $u'(t)= \p'(t)(\p(x)-\p(t))^{n-\a-1}$ and $v(t)=f^{[n]}_\p (t)$, we obtain the desired result.
\end{proof}

From the definition, it is clear that, as $\a\to(n-1)^+$, we have
$$\lim_{\a\to(n-1)^+}\LD f(x)=\int_a^x \p'(t) \frac{1}{\p'(t)}\frac{d}{dt}f^{[n-1]}_\p (t)\,dt=f^{[n-1]}_\p (x)-f^{[n-1]}_\p (a),$$
and
$$\lim_{\a\to(n-1)^+}\RD f(x)=(-1)^{n-1}(f^{[n-1]}_\p (x)-f^{[n-1]}_\p (b)).$$
If $f,\p$ are of class $C^{n+1}$, using Theorem \ref{alternativeFormula}, we get
$$\lim_{\a\to n^-}\LD f(x)=f^{[n]}_\p (x) \quad \mbox{and} \quad \lim_{\a\to n^-}\RD f(x)=(-1)^{n}f^{[n]}_\p (x).$$
Consider the norms $\|\cdot \|_C:C[a,b]\to\mathbb R$ and $\|\cdot \|_{C^{[n]}_\p}:C^n[a,b]\to\mathbb R$ given by
$$\|f \|_C:=\max_{x\in[a,b]}|f(x)| \quad \mbox{and} \quad \| f \|_{C^{[n]}_\p}:=\sum_{k=0}^n \|f^{[k]}_\p \|_C.$$

\begin{theorem} The $\p$-Caputo fractional derivatives are bounded operators. For all $\a>0$,
$$\| \LD f\|_C \leq K \| f \|_{C^{[n]}_\p} \quad \mbox{and} \quad \| \RD f\|_C \leq K \| f \|_{C^{[n]}_\p},$$
where
$$K=\frac{(\p(b)-\p(a))^{n-\a}}{\Gamma(n+1-\a)}.$$
\end{theorem}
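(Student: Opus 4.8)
The plan is to bound $|\LD f(x)|$ pointwise straight from the integral representation \eqref{def:LD}, by pulling the sup-norm of $\f$ out of the integral and evaluating the remaining elementary integral in closed form.

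First I would observe that since $\p$ is increasing with $\p'\neq 0$ we have $\p'>0$ on $[a,b]$, and since $\a\notin\mathbb N$ and $n=[\a]+1$ the exponent satisfies $n-\a\in(0,1)$; hence the kernel $\p'(t)(\p(x)-\p(t))^{n-\a-1}$ is nonnegative and integrable on $(a,x)$. Then from \eqref{def:LD},
\begin{align*}
|\LD f(x)| & \leq \frac{1}{\Gamma(n-\a)}\int_a^x\p'(t)(\p(x)-\p(t))^{n-\a-1}|\f(t)|\,dt\\
& \leq \frac{\|\f\|_C}{\Gamma(n-\a)}\int_a^x\p'(t)(\p(x)-\p(t))^{n-\a-1}\,dt.
\end{align*}
The remaining integral is computed by the substitution $u=\p(x)-\p(t)$, $du=-\p'(t)\,dt$, giving $\int_0^{\p(x)-\p(a)}u^{n-\a-1}\,du=(\p(x)-\p(a))^{n-\a}/(n-\a)$, where convergence at $u=0$ uses $n-\a>0$. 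Therefore $|\LD f(x)|\leq(\p(x)-\p(a))^{n-\a}\|\f\|_C/\Gamma(n+1-\a)$.

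Finally, because $\p$ is increasing and $n-\a>0$, the function $x\mapsto(\p(x)-\p(a))^{n-\a}$ is nondecreasing, so it is $\leq(\p(b)-\p(a))^{n-\a}$ on $[a,b]$; taking the maximum over $x$ yields $\|\LD f\|_C\leq K\|\f\|_C$ with $K$ as in the statement, and then $\|\f\|_C=\|f^{[n]}_\p\|_C\leq\sum_{k=0}^n\|f^{[k]}_\p\|_C=\|f\|_{C^{[n]}_\p}$ completes the left case. The bound for $\RD f$ follows identically, using the substitution $u=\p(t)-\p(x)$ and noting that the factor $(-1)^n$ disappears under the absolute value. I do not expect any real obstacle here: the only points worth a word of care are the integrability of the kernel near $t=x$ and the monotonicity of the power function, both of which rest on $0<n-\a<1$ (guaranteed since $\a\notin\mathbb N$); if one also wants the estimate for $\a=m\in\mathbb N$, it is immediate, since then $\LD f=f^{[m]}_\p$ and $K=1$.
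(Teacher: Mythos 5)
Your argument is correct and is essentially the paper's own proof: bound $|\f(t)|$ by its sup-norm (which is $\leq\|f\|_{C^{[n]}_\p}$), pull it out of the integral in \eqref{def:LD}, evaluate $\int_a^x\p'(t)(\p(x)-\p(t))^{n-\a-1}dt=(\p(x)-\p(a))^{n-\a}/(n-\a)$, and maximize over $x$ to obtain $K$. The only differences are cosmetic: you spell out the substitution, the monotonicity step, and the right-sided/integer-order cases, which the paper leaves implicit.
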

\begin{proof} Since $|\f(t)|\leq \| f \|_{C^{[n]}_\p}$, for all $t\in[a,b]$, we have
$$\| \LD f\|_C \leq \frac{\| f \|_{C^{[n]}_\p}}{\Gamma(n-\a)}\max_{x\in[a,b]}\int_a^x \p'(t)(\p(x)-\p(t))^{n-\a-1}dt \leq K \| f \|_{C^{[n]}_\p}.$$
\end{proof}

Following the proof we conclude that $\LD f(a)=\RD f(b)=0$ since
$$| \LD f(x)|\leq \| f \|_{C^{[n]}_\p}\frac{(\p(x)-\p(a))^{n-\a}}{\Gamma(n+1-\a)}\quad \mbox{and} \quad
| \RD f(x)| \leq \| f \|_{C^{[n]}_\p}\frac{(\p(b)-\p(x))^{n-\a}}{\Gamma(n+1-\a)}.$$

A relationship between the two types of fractional derivatives is established next.

\begin{theorem}\label{relation} If $f\in C^n[a,b]$ and  $\a>0$, then
$$\LD f(x)={D_{a+}^{\a,\p}}\left[f(x)-\sum_{k=0}^{n-1}\frac{1}{k!}(\p(x)-\p(a))^kf^{[k]}_\p(a)\right]$$
and
$$\RD f(x)={D_{b-}^{\a,\p}}\left[f(x)-\sum_{k=0}^{n-1}\frac{(-1)^k}{k!}(\p(b)-\p(x))^kf^{[k]}_\p(b)\right].$$
\end{theorem}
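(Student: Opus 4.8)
The plan is to argue at the level of operators, mimicking the classical passage from the Riemann--Liouville to the Caputo derivative. Write the left $\p$-Taylor polynomial of $f$ at $a$ as
$$P(x):=\sum_{k=0}^{n-1}\frac{1}{k!}(\p(x)-\p(a))^kf^{[k]}_\p(a),$$
so that the assertion to be proved is $\LD f=\RLD(f-P)$ (and the analogous right-sided statement).

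I would first isolate two elementary ingredients. $(i)$ The operator $I_{a+}^{1,\p}$ is a right inverse of $f^{[1]}_\p$: by the fundamental theorem of calculus (this is where $\p'\neq 0$ is used),
$$f^{[1]}_\p\,I_{a+}^{1,\p}h(x)=\frac{1}{\p'(x)}\frac{d}{dx}\int_a^x\p'(t)h(t)\,dt=h(x)\qquad(h\in C[a,b]).$$
Iterating this $n$ times and using the semigroup law, which in particular gives $(I_{a+}^{1,\p})^n={I_{a+}^{n,\p}}$, together with $(f^{[1]}_\p)^n=f^{[n]}_\p$, yields $f^{[n]}_\p{I_{a+}^{n,\p}}=\mathrm{id}$ on $C[a,b]$. $(ii)$ The $\p$-Taylor identity with integral remainder,
$$f=P+{I_{a+}^{n,\p}}f^{[n]}_\p f .$$
I would prove this by induction on $n$: for $n=1$ it is $f(x)=f(a)+\int_a^x\p'(t)f^{[1]}_\p f(t)\,dt$; for the inductive step one applies the $n=1$ identity to $f^{[n-1]}_\p f$ (which is of class $C^1$ when $f,\p\in C^n[a,b]$), applies ${I_{a+}^{n-1,\p}}$ to both sides, and uses ${I_{a+}^{n-1,\p}}[1](x)=(\p(x)-\p(a))^{n-1}/(n-1)!$ to recover the remaining Taylor coefficient.

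Granting $(i)$ and $(ii)$, the computation is immediate. Since $\RLD=f^{[n]}_\p\circ{I_{a+}^{n-\a,\p}}$ by definition, using $f-P={I_{a+}^{n,\p}}f^{[n]}_\p f$, then the semigroup law in the form ${I_{a+}^{n-\a,\p}}{I_{a+}^{n,\p}}={I_{a+}^{n,\p}}{I_{a+}^{n-\a,\p}}$, and finally $f^{[n]}_\p{I_{a+}^{n,\p}}=\mathrm{id}$,
\begin{align*}
\RLD(f-P)(x)&=f^{[n]}_\p{I_{a+}^{n-\a,\p}}{I_{a+}^{n,\p}}f^{[n]}_\p f(x)\\
&=f^{[n]}_\p{I_{a+}^{n,\p}}\left({I_{a+}^{n-\a,\p}}f^{[n]}_\p f\right)(x)={I_{a+}^{n-\a,\p}}f^{[n]}_\p f(x)=\LD f(x),
\end{align*}
the last equality being the definition \eqref{def:LD} of $\LD f$. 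The right-sided identity is obtained the same way, replacing $f^{[1]}_\p$ by $-f^{[1]}_\p$, $I_{a+}$ by $I_{b-}$, the base point $a$ by $b$, and tracking the signs $(-1)^k$ that arise from $(-f^{[1]}_\p)^k$.

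I expect ingredient $(ii)$ to be the delicate point: under only $f,\p\in C^n[a,b]$ the function $f^{[n]}_\p f$ is in general merely continuous (note $1/\p'\in C^{n-1}$), so the induction must be arranged so that the fundamental theorem of calculus is applied only to $f^{[n-1]}_\p f\in C^1$, never to $f^{[n]}_\p f$ itself. A couple of minor bookkeeping points: $n-\a\in(0,1)$ and $2n-\a\ge n$, so ${I_{a+}^{n-\a,\p}}$ is a genuine fractional integral and the semigroup law may be used as above; and ${I_{a+}^{n,\p}}$ maps continuous functions into $C^n$, which is what makes the final application of $f^{[n]}_\p$ legitimate. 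Everything else is routine.
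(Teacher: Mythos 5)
Your proof is correct, but it takes a genuinely different route from the paper. The paper proves the identity ``from the right-hand side'': it writes out ${D_{a+}^{\a,\p}}\bigl[f-P\bigr]$ (with $P$ the $\p$-Taylor polynomial) as $\left(\frac{1}{\p'(x)}\frac{d}{dx}\right)^n{I_{a+}^{n-\a,\p}}[f-P]$ and then integrates by parts inside the integral $n$ times, each step absorbing one factor of $\frac{1}{\p'(x)}\frac{d}{dx}$ and killing the lowest remaining Taylor coefficient, until what is left is exactly ${I_{a+}^{n-\a,\p}}f^{[n]}_\p$, i.e.\ $\LD f$. You instead work at the operator level: you establish the $\p$-Taylor formula with integral remainder $f-P={I_{a+}^{n,\p}}f^{[n]}_\p f$ and the right-inverse identity $f^{[n]}_\p\,{I_{a+}^{n,\p}}=\mathrm{id}$ on $C[a,b]$, and then conclude by the semigroup law, since ${I_{a+}^{n-\a,\p}}{I_{a+}^{n,\p}}={I_{a+}^{2n-\a,\p}}={I_{a+}^{n,\p}}{I_{a+}^{n-\a,\p}}$. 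This is non-circular (your ingredient $(ii)$ is proved from scratch by induction and the fundamental theorem of calculus), and it is in fact essentially equivalent to the paper's later Theorem~\ref{inverseoperation1}, $\LI\LD f=f-P$; so your argument amounts to deriving Theorem~\ref{relation} from that inversion formula, whereas the paper proves the two results independently by parallel iterated integrations by parts. What your route buys is a cleaner, more conceptual proof that localizes all the analysis in one classical lemma (the $\p$-Taylor remainder formula) and makes the regularity bookkeeping transparent — you correctly note that only $f^{[n-1]}_\p f\in C^1$ is ever differentiated and that $f^{[n]}_\p f$ is merely continuous; what the paper's route buys is self-containedness at the level of explicit kernels, with no appeal to commutativity of the integral operators beyond the stated semigroup property. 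One small presentational gap, easily repaired: in the inductive step of $(ii)$ you should say explicitly that the identity obtained by applying ${I_{a+}^{n-1,\p}}$ to the $n=1$ formula for $f^{[n-1]}_\p f$ is then combined with the induction hypothesis $f=\sum_{k=0}^{n-2}\frac{f^{[k]}_\p(a)}{k!}(\p(x)-\p(a))^k+{I_{a+}^{n-1,\p}}f^{[n-1]}_\p f$; as written, the step that reassembles the full polynomial $P$ is left implicit.
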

\begin{proof} By definition and using integration by parts, we deduce the following:
\begin{align*}
&\Gamma(n-\a)\cdot {D_{a+}^{\a,\p}}\left[f(x)-\sum_{k=0}^{n-1}\frac{f^{[k]}_\p(a)}{k!}(\p(x)-\p(a))^k\right]\\
&=\left(\frac{1}{\p'(x)}\frac{d}{dx}\right)^n\int_a^x \p'(t)(\p(x)-\p(t))^{n-\a-1}
\left[f(t)-\sum_{k=0}^{n-1}\frac{f^{[k]}_\p(a)}{k!}(\p(t)-\p(a))^k\right]  \,dt\\
&=\left(\frac{1}{\p'(x)}\frac{d}{dx}\right)^n\int_a^x \frac{\p'(t)(\p(x)-\p(t))^{n-\a}}{n-\a}
\left[f^{[1]}_\p(t)-\sum_{k=1}^{n-1}\frac{f^{[k]}_\p(a)}{(k-1)!}(\p(t)-\p(a))^{k-1}\right]  \,dt\\
&=\left(\frac{1}{\p'(x)}\frac{d}{dx}\right)^{n-1}\int_a^x \p'(t)(\p(x)-\p(t))^{n-\a-1}
\left[f^{[1]}_\p(t)-\sum_{k=1}^{n-1}\frac{f^{[k]}_\p(a)}{(k-1)!}(\p(t)-\p(a))^{k-1}\right]  \,dt.\end{align*}
Repeating this procedure, we obtain
$$\left(\frac{1}{\p'(x)}\frac{d}{dx}\right)^{n-2}\int_a^x \p'(t)(\p(x)-\p(t))^{n-\a-1}
\left[f^{[2]}_\p(t)-\sum_{k=2}^{n-1}\frac{f^{[k]}_\p(a)}{(k-2)!}(\p(t)-\p(a))^{k-2}\right]  \,dt.$$
Repeating $(n-3)$ -times, one gets at the desired formula.
\end{proof}

Thus, if for all $k=0,\ldots,n-1$, $f^{[k]}_\p(a)=0$, then $\LD f(x)={D_{a+}^{\a,\p}}f(x)$, and if $f^{[k]}_\p(b)=0$ then $\RD f(x)={D_{b-}^{\a,\p}}f(x)$.

\section{Examples}
\label{sec:examples}

\begin{lemma}\label{lemma:power} Given $\beta\in\mathbb R$, consider the functions
$$f(x)=(\p(x)-\p(a))^{\beta-1} \quad \mbox{and} \quad g(x)=(\p(b)-\p(x))^{\beta-1},$$
where $\beta>n$. Then, for $\a>0$,
$$\LD f(x)=\frac{\Gamma(\beta)}{\Gamma(\beta-\a)}(\p(x)-\p(a))^{\beta-\a-1} \quad \mbox{and} \quad
\RD g(x)=\frac{\Gamma(\beta)}{\Gamma(\beta-\a)}(\p(b)-\p(x))^{\beta-\a-1}.$$
\end{lemma}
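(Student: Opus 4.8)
The plan is to reduce the computation to a Beta-integral by means of a linear change of variables that straightens out the function $\p$. First I would compute the iterated operator applied to $f$. Since $\frac{1}{\p'(x)}\frac{d}{dx}(\p(x)-\p(a))^{\gamma}=\gamma(\p(x)-\p(a))^{\gamma-1}$ for any real exponent $\gamma$, the operator $\frac{1}{\p'}\frac{d}{dx}$ acts on powers of $\p(x)-\p(a)$ exactly as ordinary differentiation acts on powers of a variable; applying it $n$ times (a one-line induction) gives
$$f^{[n]}_\p(t)=\frac{\Gamma(\beta)}{\Gamma(\beta-n)}(\p(t)-\p(a))^{\beta-n-1},$$
which is continuous on $[a,b]$ precisely because $\beta>n$, so that $f\in C^n[a,b]$ and the definition of $\LD$ applies.

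Next I would substitute this into formula \eqref{def:LD}, obtaining
$$\LD f(x)=\frac{\Gamma(\beta)}{\Gamma(n-\a)\Gamma(\beta-n)}\int_a^x \p'(t)(\p(x)-\p(t))^{n-\a-1}(\p(t)-\p(a))^{\beta-n-1}\,dt,$$
and then perform the change of variables $s=\frac{\p(t)-\p(a)}{\p(x)-\p(a)}$, so that $\p'(t)\,dt=(\p(x)-\p(a))\,ds$, $\p(x)-\p(t)=(1-s)(\p(x)-\p(a))$ and $\p(t)-\p(a)=s(\p(x)-\p(a))$. Collecting the powers of $\p(x)-\p(a)$, the integral becomes $(\p(x)-\p(a))^{\beta-\a-1}\int_0^1 (1-s)^{n-\a-1}s^{\beta-n-1}\,ds$. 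The remaining integral is the Beta function $B(\beta-n,n-\a)=\Gamma(\beta-n)\Gamma(n-\a)/\Gamma(\beta-\a)$, which converges because $\beta-n>0$ by hypothesis and $n-\a>0$ since $n=[\a]+1$. Substituting this value and cancelling the Gamma factors yields $\LD f(x)=\frac{\Gamma(\beta)}{\Gamma(\beta-\a)}(\p(x)-\p(a))^{\beta-\a-1}$, as claimed.

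For the right-sided derivative $\RD g$ I would argue symmetrically: one checks $-\frac{1}{\p'(x)}\frac{d}{dx}(\p(b)-\p(x))^{\gamma}=\gamma(\p(b)-\p(x))^{\gamma-1}$, hence $(-1)^n g^{[n]}_\p(t)=\frac{\Gamma(\beta)}{\Gamma(\beta-n)}(\p(b)-\p(t))^{\beta-n-1}$, and the substitution $s=\frac{\p(b)-\p(t)}{\p(b)-\p(x)}$ reduces the right-hand integral to the same Beta function, giving the stated formula.

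I do not expect any serious obstacle here; the only points requiring a little care are the verification that $\frac{1}{\p'}\frac{d}{dx}$ behaves like ordinary differentiation on these special powers, the bookkeeping of the exponents of $\p(x)-\p(a)$ after the substitution, and noting that the hypothesis $\beta>n$ is exactly what guarantees both $f\in C^n$ and the convergence of the Beta integral. An alternative route would be to combine Theorem \ref{relation} with the known formula for the Riemann--Liouville $\p$-fractional derivative of a power, but the direct computation above is shorter and self-contained.
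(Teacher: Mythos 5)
Your proposal is correct and follows essentially the same route as the paper: compute $f^{[n]}_\p$ explicitly, insert it into the integral definition, and reduce to a Beta integral via the substitution $s=(\p(t)-\p(a))/(\p(x)-\p(a))$, then apply $B(x,y)=\Gamma(x)\Gamma(y)/\Gamma(x+y)$. The exponent bookkeeping and the role of the hypothesis $\beta>n$ are handled exactly as in the paper's argument, so there is nothing to add.
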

\begin{proof} Since
$$\f(x)=\frac{\Gamma(\beta)}{\Gamma(\beta-n)}(\p(x)-\p(a))^{\beta-n-1},$$
we have
\begin{align*}\LD f(x)=&\frac{\Gamma(\beta)}{\Gamma(n-\a)\Gamma(\beta-n)}(\p(x)-\p(a))^{n-\a-1}\\
&\times\int_a^x \p'(t)\left(1-\frac{\p(t)-\p(a)}{\p(x)-\p(a)}\right)^{n-\a-1} (\p(t)-\p(a))^{\beta-n-1}dt.\end{align*}
With the change of variables $u=(\p(t)-\p(a))/(\p(x)-\p(a))$, and with the help of the Beta function
$$B(x,y)=\int_0^1t^{x-1}(1-t)^{y-1}\,dt, \quad x,y>0,$$
we obtain
$$\LD f(x)=\frac{\Gamma(\beta)}{\Gamma(n-\a)\Gamma(\beta-n)}(\p(x)-\p(a))^{\beta-\a-1}B(n-\a,\beta-n).$$
Using the following property of the Beta function
$$B(x,y)=\frac{\Gamma(x)\Gamma(y)}{\Gamma(x+y)},$$
we prove the formula.
\end{proof}

For example, for $f(x)=(\psi(x)-\psi(0))^2$, we have $ {^CD_{0+}^{\a,\p}} f(x)=2/\Gamma(3-\a)(\psi(x)-\psi(0))^{2-\a}$. Note that, when $\a=1$, we have ${^CD_{0+}^{1,\p}}=2(\psi(x)-\psi(0))$. In Figure \ref{FigExemploPotencia}, we show some graphs of $\LD f(x)$, for different values of $\a$ and different kernels $\p$.

\begin{figure}[!htbp]
\centering
\includegraphics[width=10.3cm]{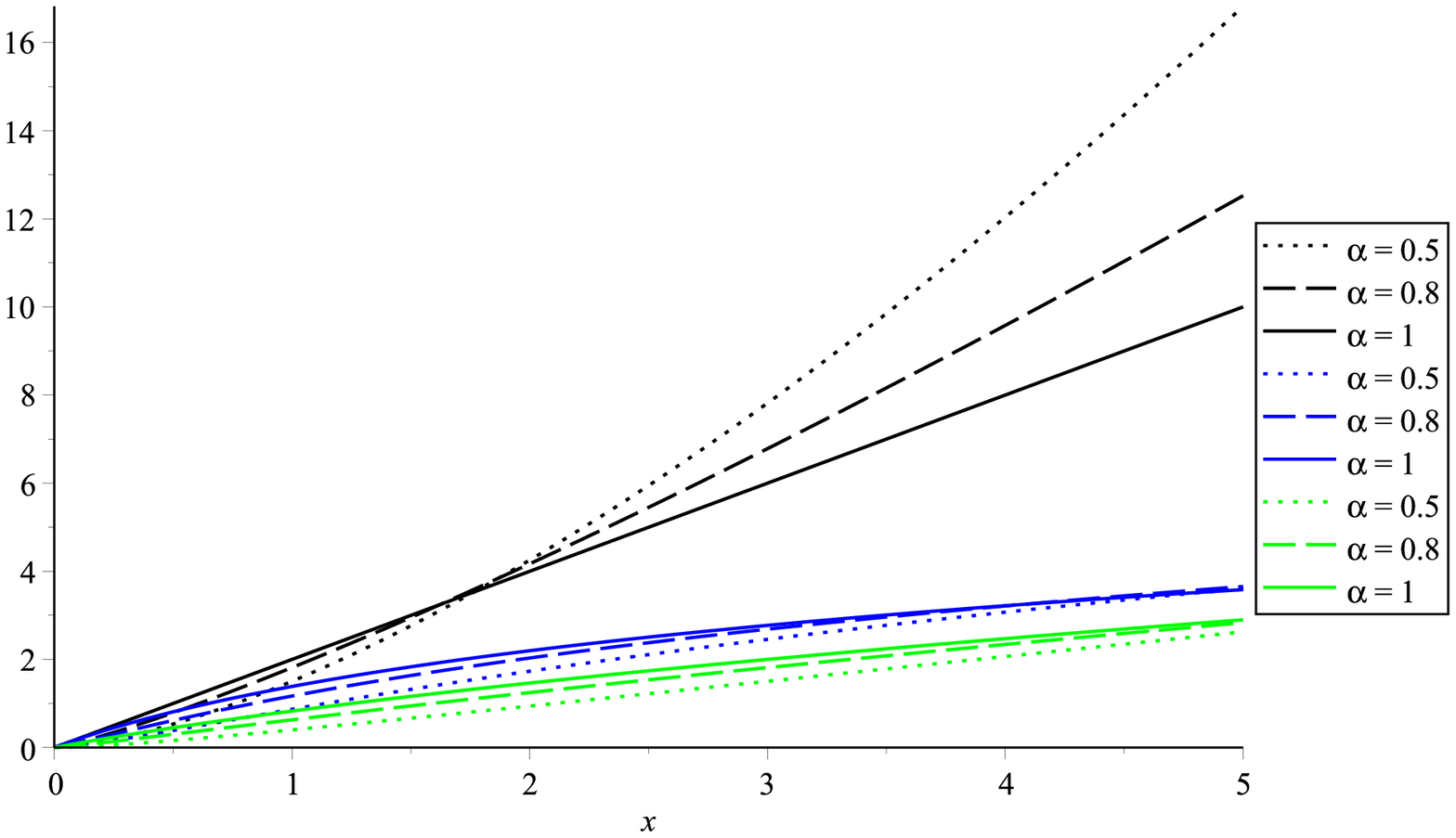}
\caption{Graph of $ {^CD_{0+}^{\a,\p}} f(x)$, for the kernels $\p(x)=x$ (black), $\p(x)=\ln(x+1)$ (blue) and $\p(x)=\sqrt{x+1}$ (green).}
\label{FigExemploPotencia}
\end{figure}

In particular, given $n \leq k \in\mathbb N$, we have
$$\LD (\p(x)-\p(a))^k=\frac{k!}{\Gamma(k+1-\a)}(\p(x)-\p(a))^{k-\a}$$
and $$
\RD (\p(b)-\p(x))^k=\frac{k!}{\Gamma(k+1-\a)}(\p(b)-\p(x))^{k-\a}.$$
On the other hand, for $n > k \in\mathbb N_0$, we have
\begin{equation}\label{aux2}\LD (\p(x)-\p(a))^k=\RD (\p(b)-\p(x))^k=0\end{equation}
since
$$D^n_\p (\p(x)-\p(a))^k= D^n_\p (\p(b)-\p(x))^k=0.$$

\begin{lemma}\label{ex:E} Given $\lambda\in\mathbb R$ and  $\a>0$, consider the functions
$$f(x)=E_\a(\lambda(\p(x)-\p(a))^\a) \quad \mbox{and} \quad g(x)=E_\a(\lambda (\p(b)-\p(x))^\a),$$
where $E_\a$ is the Mittag-Leffler function. Then,
$$\LD f(x)=\lambda f(x) \quad \mbox{and} \quad \RD g(x)=\lambda g(x).$$
\end{lemma}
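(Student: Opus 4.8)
The plan is to expand $E_\a$ into its defining power series and to apply $\LD$ term by term. Since the Mittag-Leffler function $E_\a(z)=\sum_{k=0}^\infty z^k/\Gamma(\a k+1)$ is entire,
$$f(x)=\sum_{k=0}^\infty \frac{\lambda^k}{\Gamma(\a k+1)}(\p(x)-\p(a))^{\a k}.$$
Granting for the moment that $\LD$ may be brought inside the sum, I would evaluate each term via Section \ref{sec:examples}. The $k=0$ term is the constant function $1$, for which $\LD 1=0$ by \eqref{aux2}. For $k\geq 1$, Lemma \ref{lemma:power} applies with $\beta=\a k+1$: this is admissible since $\a\notin\mathbb N$ forces $n-1=[\a]<\a$, whence $\beta=\a k+1\geq\a+1>n$ for every $k\geq 1$.

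Lemma \ref{lemma:power} then yields, for $k\geq 1$,
$$\LD(\p(x)-\p(a))^{\a k}=\frac{\Gamma(\a k+1)}{\Gamma(\a k+1-\a)}(\p(x)-\p(a))^{\a k-\a}=\frac{\Gamma(\a k+1)}{\Gamma(\a(k-1)+1)}(\p(x)-\p(a))^{\a(k-1)}.$$
Substituting into the termwise expression for $\LD f$, the factor $\Gamma(\a k+1)$ cancels and the reindexing $j=k-1$ gives
$$\LD f(x)=\sum_{k=1}^\infty\frac{\lambda^k}{\Gamma(\a(k-1)+1)}(\p(x)-\p(a))^{\a(k-1)}=\lambda\sum_{j=0}^\infty\frac{\lambda^j}{\Gamma(\a j+1)}(\p(x)-\p(a))^{\a j}=\lambda f(x),$$
which is the asserted identity. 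The equality $\RD g(x)=\lambda g(x)$ follows from the same computation with $\p(b)-\p(x)$ in place of $\p(x)-\p(a)$ and the right-hand analogues of the cited results.

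The step that genuinely requires justification --- and hence the main obstacle --- is the interchange of $\LD$ with the infinite sum; the rest is a routine consequence of Lemma \ref{lemma:power}. Writing $\LD={I_{a+}^{n-\a,\p}}\circ D^n_\p$, one checks that $D^n_\p(\p(x)-\p(a))^{\a k}=\bigl(\Gamma(\a k+1)/\Gamma(\a k+1-n)\bigr)(\p(x)-\p(a))^{\a k-n}$, so that the series obtained by applying $D^n_\p$ termwise, namely $\sum_{k=1}^\infty\lambda^k\,\Gamma(\a k+1-n)^{-1}(\p(x)-\p(a))^{\a k-n}$, is again of Mittag-Leffler type; since $\p,\p'$ are bounded on compacta and $\Gamma(\a k+1-n)^{-1}$ decays super-exponentially in $k$, this series --- together with those for the intermediate $\p$-derivatives --- converges absolutely and uniformly on every compact subinterval of $(a,b]$, so that $D^n_\p f$ may be computed termwise. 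Next, because $\a k-n>-1$ for every $k\geq 1$ and the kernel $\p'(t)(\p(x)-\p(t))^{n-\a-1}$ is locally integrable, a dominated-convergence argument lets ${I_{a+}^{n-\a,\p}}$ act term by term, which legitimizes the calculation above. Finally, when $\a<1$ the definition is to be read on $(a,b]$, since then $f$ need not belong to $C^n$ up to the left endpoint; there \eqref{def:LD} still holds as a convergent improper integral and the identity $\LD f=\lambda f$ remains valid.
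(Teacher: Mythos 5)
Your proof is correct and takes essentially the same route as the paper: expand the Mittag-Leffler series, drop the constant ($k=0$) term, apply Lemma \ref{lemma:power} with $\beta=\a k+1$ to each remaining term, cancel and reindex to recover $\lambda f(x)$. The only difference is that you also justify the interchange of $\LD$ with the infinite sum (and note the regularity caveat at $x=a$), a step the paper performs without comment.
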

\begin{proof} Using Lemma \ref{lemma:power}, we have
\begin{align*} \LD f(x) & = \sum_{k=0}^\infty\frac{\lambda^k}{\Gamma(\a k+1)}\LD ((\p(x)-\p(a))^{\a k})\\
                & = \sum_{k=1}^\infty\frac{\lambda^k}{\Gamma(\a k+1)}\frac{\Gamma(\a k+1)}{\Gamma(\a k+1-\a)} (\p(x)-\p(a))^{\a k-\a}\\
                &= \lambda  \sum_{k=1}^\infty\frac{\lambda^{k-1}}{\Gamma(\a (k-1)+1)}(\p(x)-\p(a))^{\a (k-1)}\\
                & = \lambda f(x).
\end{align*}
\end{proof}

For $f(x)=E_\a((\p(x)-\p(0))^\a)$, we have ${^CD_{0+}^{\a,\p}} f(x)=f(x)$, and for $\a=1$, we have ${^CD_{0+}^{1,\p}}=\exp(\psi(x)-\psi(0))$. In Figure \ref{FigExemploExp}, we present the graphs of ${^CD_{0+}^{\a,\p}}f(x)$, for $\a\in\{0.5,0.8,1\}$, and different kernels $\p$.

\begin{figure}[!htbp]
\centering
\includegraphics[width=10.3cm]{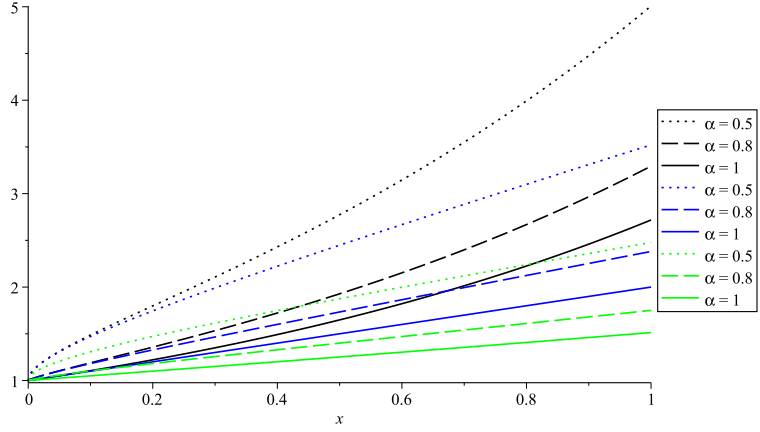}
\caption{Graph of ${^CD_{0+}^{\a,\p}} f(x)$, for the kernels $\p(x)=x$ (black), $\p(x)=\ln(x+1)$ (blue) and $\p(x)=\sqrt{x+1}$ (green).}
\label{FigExemploExp}
\end{figure}

\section{Relation between integration and derivative}
\label{sec:relation}

As we shall see in this section, the $\p$-Caputo fractional derivative is an inverse operation for the fractional integral with respect to the same function.

\begin{theorem}\label{inverseoperation1} Given a function $f\in C^n[a,b]$ and  $\a>0$, we have
$$\LI \LD f(x)=f(x)-\sum_{k=0}^{n-1}\frac{f^{[k]}_\p(a)}{k!}(\p(x)-\p(a))^k$$
and
$$\RI \RD f(x)=f(x)-\sum_{k=0}^{n-1}(-1)^k\frac{f^{[k]}_\p(b)}{k!}(\p(b)-\p(x))^k.$$
\end{theorem}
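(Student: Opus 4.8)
The plan is to reduce the statement about the $\psi$-Caputo derivative to the corresponding (known) statement for the Riemann--Liouville $\psi$-derivative, using Theorem \ref{relation}. First I would set $g(x) = f(x) - \sum_{k=0}^{n-1}\frac{1}{k!}(\psi(x)-\psi(a))^k f^{[k]}_\psi(a)$, so that by Theorem \ref{relation} we have $\LD f(x) = \RLD g(x)$. Then $\LI\LD f(x) = \LI\RLD g(x)$, and it suffices to know that $\LI\RLD g(x) = g(x)$; adding back the polynomial-type terms gives exactly the claimed right-hand side.

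The key remaining ingredient is therefore the identity $\LI\RLD g(x) = g(x)$ for the Riemann--Liouville $\psi$-fractional operators. I would establish this by the standard route. Since $\RLD g(x) = \f_{\psi}^{[n]} \, I_{a+}^{n-\a,\psi} g$ applied in the order $D_\psi^n I^{n-\a}$ (wait — here it is $D^\a_{a+} = (\tfrac{1}{\psi'}\tfrac{d}{dx})^n I_{a+}^{n-\a,\psi}$), one has
$$\LI\RLD g(x) = \LI \left(\frac{1}{\psi'(x)}\frac{d}{dx}\right)^n I_{a+}^{n-\a,\psi}g(x).$$
Writing $h = I_{a+}^{n-\a,\psi} g$, I would first use the semigroup law $\LI I_{a+}^{n-\a,\psi} = I_{a+}^{n,\psi}$ together with the observation that $n$-fold $\psi$-integration and the operator $(\tfrac{1}{\psi'}\tfrac{d}{dx})^n$ are mutually inverse up to boundary terms, i.e. $I_{a+}^{n,\psi} \f_{\psi}^{[n]} h(x) = h(x) - \sum_{k=0}^{n-1}\frac{1}{k!}(\psi(x)-\psi(a))^k h^{[k]}_\psi(a)$, a fact provable by repeated ordinary integration by parts (or a change of variables $u = \psi(t)$ turning everything into the classical Taylor remainder identity). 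Applying this with $h = I_{a+}^{n-\a,\psi}g$, whose $\psi$-derivatives $h^{[k]}_\psi(a)$ vanish for $k = 0,\dots,n-1$ because the fractional integral $I^{n-\a,\psi}g$ and its first $n-1$ $\psi$-derivatives are zero at $x=a$ (each is again a $\psi$-fractional integral of positive order of a continuous function), all boundary terms drop out and we get $\LI\RLD g(x) = g(x)$.

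Alternatively, and perhaps more cleanly, I would apply Lemma \ref{lemma:power} and the linearity/series manipulations as in Lemma \ref{ex:E}: it is enough to verify $\LI\LD$ acts as the identity (minus the Taylor polynomial) on the monomials $(\psi(x)-\psi(a))^\beta$, and then invoke density/uniform convergence to pass to general $f\in C^n$. Concretely, for $\beta > n$ one combines Lemma \ref{lemma:power} with the analogous (and standard) formula $\LI (\psi(x)-\psi(a))^{\gamma} = \frac{\Gamma(\gamma+1)}{\Gamma(\gamma+1+\a)}(\psi(x)-\psi(a))^{\gamma+\a}$ to see the two operators compose to the identity on such powers, while on $(\psi(x)-\psi(a))^k$ with $k<n$ the Caputo derivative kills them (see \eqref{aux2}) and they reappear precisely in the subtracted Taylor sum.

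The main obstacle, and the only genuinely delicate point, is justifying that the boundary terms vanish: one must check that $I_{a+}^{n-\a,\psi}g$ together with its $\psi$-derivatives of orders $0,\dots,n-1$ vanish at $x=a$. This requires a careful estimate of the type $|I_{a+}^{\mu,\psi}\phi(x)| \le C\,(\psi(x)-\psi(a))^{\mu}\|\phi\|_C$ for $\mu>0$ and $\phi$ continuous (exactly the kind of bound already used in the boundedness theorem and in the remark that $\LD f(a)=0$), applied after differentiating under the integral sign the appropriate number of times; the commutation of $(\tfrac{1}{\psi'}\tfrac{d}{dx})$ with the $\psi$-fractional integral and the requisite differentiability of $\psi$ must be tracked so the integrals stay absolutely convergent. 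Everything else is routine integration by parts and Beta-function bookkeeping.
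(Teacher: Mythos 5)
Your overall strategy---reducing the Caputo statement to a Riemann--Liouville inversion via Theorem \ref{relation}---differs from the paper's, which never leaves the Caputo setting: there one simply notes that, by definition, $\LI\LD f=\LI {I_{a+}^{n-\a,\p}}\f={I_{a+}^{n,\p}}\f$ by the semigroup law for fractional \emph{integrals} (both factors are integrals, so the law applies directly), and then $n$ successive integrations by parts peel off exactly the terms $\frac{f^{[k]}_\p(a)}{k!}(\p(x)-\p(a))^k$. Your reduction is legitimate in principle, but the way you discharge the key remaining identity $\LI\RLD g=g$ has a genuine gap.

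Concretely: after writing $\LI\RLD g=\LI\bigl(\tfrac{1}{\p'(x)}\tfrac{d}{dx}\bigr)^n h$ with $h={I_{a+}^{n-\a,\p}}g$, you invoke the semigroup law $\LI{I_{a+}^{n-\a,\p}}={I_{a+}^{n,\p}}$ and the integer-order inversion ${I_{a+}^{n,\p}}h^{[n]}_\p=h-\sum_{k=0}^{n-1}\frac{h^{[k]}_\p(a)}{k!}(\p(x)-\p(a))^k$. Neither applies to the composition you actually have: in $\LI\bigl(\tfrac{1}{\p'}\tfrac{d}{dx}\bigr)^n{I_{a+}^{n-\a,\p}}$ the derivative sits \emph{between} the two integral operators, so the semigroup law cannot merge them, and the integer-order identity, applied to $h$, returns $h={I_{a+}^{n-\a,\p}}g$, not $g$. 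Moreover, your justification that $h^{[k]}_\p(a)=0$ for $k=0,\dots,n-1$ (``each is again a fractional integral of positive order'') is false for $k\ge 1$: since $n-\a\in(0,1)$, $h^{[k]}_\p$ is a Riemann--Liouville derivative of $g$ of positive order $k-(n-\a)$; these quantities do vanish at $a$ for your particular $g$, but only because $g^{[j]}_\p(a)=0$ for $j<n$, which is an argument you have not made. The natural repair is to use that very vanishing to replace $\RLD g$ by $\LD g$ (the remark following Theorem \ref{relation}), after which $\LI\LD g=\LI{I_{a+}^{n-\a,\p}}g^{[n]}_\p={I_{a+}^{n,\p}}g^{[n]}_\p$ and repeated integration by parts give $g$---but at that point you have reproduced the paper's proof applied to $g$, so the detour through Theorem \ref{relation} buys nothing. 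Your alternative route (verify the identity on powers $(\p(x)-\p(a))^\beta$ via Lemma \ref{lemma:power} and the formula for $\LI$ of a power, then pass to general $f$) could be made rigorous, but the passage to the limit is precisely where the work lies: you would need polynomials in $\p$ to be dense in the $\|\cdot\|_{C^{[n]}_\p}$ norm and continuity of $f\mapsto\LI\LD f$ and of $f\mapsto f^{[k]}_\p(a)$ in that norm, none of which you establish. As written, neither of your two routes is a complete proof, while the direct computation from the definition is shorter than either.
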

\begin{proof} Using the semigroup law and the integration by parts formula repeatedly, we get
\begin{align*}
\LI\LD f(x) & = \LI {I_{a+}^{n-\a,\p}}\f (x) = {I_{a+}^{n,\p}}\f(x)\\
            & = \frac{1}{(n-1)!}\int_a^x \p'(t)(\p(x)-\p(t))^{n-1}\f(t)\,dt\\
            & = \frac{1}{(n-1)!}\int_a^x (\p(x)-\p(t))^{n-1}\cdot \frac{d}{dt}f^{[n-1]}_\p(t)\,dt\\
            & = \frac{1}{(n-2)!}\int_a^x (\p(x)-\p(t))^{n-2}\cdot \frac{d}{dt}f^{[n-2]}_\p(t)\,dt-\frac{f^{[n-1]}_\p(a)}{(n-1)!}(\p(x)-\p(a))^{n-1}\\
            & = \frac{1}{(n-3)!}\int_a^x (\p(x)-\p(t))^{n-3}\cdot \frac{d}{dt}f^{[n-3]}_\p(t)\,dt
            -\sum_{k=n-2}^{n-1}\frac{f^{[k]}_\p(a)}{k!}(\p(x)-\p(a))^k\\
            & =\ldots =\int_a^x \frac{d}{dt}f(t)\,dt-\sum_{k=1}^{n-1}\frac{f^{[k]}_\p(a)}{k!}(\p(x)-\p(a))^k\\
            & =f(x)-\sum_{k=0}^{n-1}\frac{f^{[k]}_\p(a)}{k!}(\p(x)-\p(a))^k\\
\end{align*}
\end{proof}

In particular, given $\a\in(0,1)$, we have
$$\LI\LD f(x)=f(x)-f(a) \quad \mbox{and} \quad \RI \RD f(x)=f(x)-f(b).$$
From Theorem \ref{inverseoperation1}, we deduce a Taylor's formula:
\begin{align*}
f(x) & =\sum_{k=0}^{n-1}\frac{f^{[k]}_\p(a)}{k!}(\p(x)-\p(a))^k+\LI\LD f(x)\\
     & =\sum_{k=0}^{n-1}(-1)^k\frac{f^{[k]}_\p(b)}{k!}(\p(b)-\p(x))^k+\RI\RD f(x)\\
\end{align*}

\begin{theorem}\label{inverseoperation2} Given a function $f\in C^1[a,b]$ and  $\a>0$, we have
$$\LD \LI f(x)=f(x)\quad \mbox{and} \quad \RD \RI f(x)=f(x).$$
\end{theorem}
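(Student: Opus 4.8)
The strategy is to peel off $n-1$ ordinary $\p$-integrations using the semigroup law for fractional integrals, thereby reducing the claim to the order-$(0,1)$ case, and then to differentiate an integrated-by-parts representation of $\LI f$. I will only treat $\LD\LI f=f$, the right-sided identity being obtained by the same computation with the obvious sign changes.

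Write $n=[\a]+1$ and set $\beta:=\a-(n-1)\in(0,1)$, so that $n-\a=1-\beta$. By the semigroup law, $\LI f={I_{a+}^{n-1,\p}}\left({I_{a+}^{\beta,\p}}f\right)$. The elementary identity $\left(\frac{1}{\p'(x)}\frac{d}{dx}\right){I_{a+}^{1,\p}}g(x)=g(x)$ for continuous $g$, iterated via the semigroup law, gives $\left(\frac{1}{\p'(x)}\frac{d}{dx}\right)^{m}{I_{a+}^{m,\p}}g(x)=g(x)$; applied with $m=n-1$ and $g={I_{a+}^{\beta,\p}}f$ (which is continuous since $f$ is bounded and $\beta>0$) it yields $\left(\frac{1}{\p'(x)}\frac{d}{dx}\right)^{n}\LI f(x)=\left(\frac{1}{\p'(x)}\frac{d}{dx}\right){I_{a+}^{\beta,\p}}f(x)$. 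Hence, by the definition of the $\p$-Caputo derivative,
$$\LD\LI f(x)={I_{a+}^{1-\beta,\p}}\left[\left(\frac{1}{\p'(x)}\frac{d}{dx}\right){I_{a+}^{\beta,\p}}f(x)\right],$$
and everything is reduced to computing $\left(\frac{1}{\p'(x)}\frac{d}{dx}\right){I_{a+}^{\beta,\p}}f(x)$ for $\beta\in(0,1)$.

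For this I would first integrate by parts in ${I_{a+}^{\beta,\p}}f(x)$, with $u'(t)=\p'(t)(\p(x)-\p(t))^{\beta-1}$ and $v(t)=f(t)$, exactly as in Theorem \ref{alternativeFormula}, obtaining
$${I_{a+}^{\beta,\p}}f(x)=\frac{(\p(x)-\p(a))^{\beta}}{\Gamma(\beta+1)}f(a)+\frac{1}{\Gamma(\beta+1)}\int_a^x(\p(x)-\p(t))^{\beta}f'(t)\,dt.$$
In this form the kernel $(\p(x)-\p(t))^{\beta}$ vanishes as $t\to x$ and its $x$-derivative $\beta\p'(x)(\p(x)-\p(t))^{\beta-1}$ is integrable in $t$ near $x$; since $f'$ is bounded on $[a,b]$, differentiation under the integral sign is legitimate and, after writing $f'(t)=\p'(t)f^{[1]}_\p(t)$, produces
$$\left(\frac{1}{\p'(x)}\frac{d}{dx}\right){I_{a+}^{\beta,\p}}f(x)=\frac{(\p(x)-\p(a))^{\beta-1}}{\Gamma(\beta)}f(a)+{I_{a+}^{\beta,\p}}f^{[1]}_\p(x).$$
Applying ${I_{a+}^{1-\beta,\p}}$ to this: by the power rule for fractional integrals (the same Beta-function computation as in the proof of Lemma \ref{lemma:power}) one has ${I_{a+}^{1-\beta,\p}}(\p(x)-\p(a))^{\beta-1}=\Gamma(\beta)$, so the first term contributes $f(a)$; by the semigroup law the second term contributes ${I_{a+}^{1,\p}}f^{[1]}_\p(x)=\int_a^x f'(t)\,dt=f(x)-f(a)$. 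Summing, $\LD\LI f(x)=f(x)$.

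The step I expect to be the main obstacle is justifying the differentiation under the integral sign when $\beta<1$: the kernel $(\p(x)-\p(t))^{\beta-1}$ of ${I_{a+}^{\beta,\p}}f$ is singular at $t=x$, so one genuinely has to integrate by parts first before differentiating, and this is exactly the point where the hypothesis $f\in C^1[a,b]$ enters. The rest is routine bookkeeping with the semigroup law and the power rule.
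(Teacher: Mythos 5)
Your argument is correct and follows essentially the same route as the paper: both proofs reduce $\left(\frac{1}{\p'(x)}\frac{d}{dx}\right)^n\LI f(x)$, via one integration by parts, to the same key identity $\frac{(\p(x)-\p(a))^{\beta-1}}{\Gamma(\beta)}f(a)+{I_{a+}^{\beta,\p}}f^{[1]}_\p(x)$ with $\beta=\a-n+1$, and then apply the remaining integral of order $n-\a=1-\beta$. The only difference is in packaging the final step: the paper evaluates the resulting composition explicitly through the change of variables $u=(\p(t)-\p(a))/(\p(x)-\p(a))$ and Dirichlet's formula, whereas you invoke the power rule for fractional integrals and the semigroup law, which encapsulate the same Beta-function computation.
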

\begin{proof} By definition,
\begin{equation}\label{aux1}\LD \LI f(x)= \frac{1}{\Gamma(n-\a)}\int_a^x \p'(t)(\p(x)-\p(t))^{n-\a-1}F^{[n]}_\p (t)\,dt,\end{equation}
where $F(x)=\LI f(x)$. By direct computations, we get
\begin{align*}
F^{[n-1]}_\p (x) & = \frac{1}{\Gamma(\a-n+1)}\int_a^x \p'(t)(\p(x)-\p(t))^{\a-n}f(t)\,dt\\
                 & = \frac{f(a)}{\Gamma(\a-n+2)}(\p(x)-\p(a))^{\a-n+1} +\frac{1}{\Gamma(\a-n+2)}\int_a^x (\p(x)-\p(t))^{\a-n+1}f'(t)\,dt,\\
\end{align*}
and thus
$$F^{[n]}_\p (x)=\frac{f(a)}{\Gamma(\a-n+1)}(\p(x)-\p(a))^{\a-n} +\frac{1}{\Gamma(\a-n+1)}\int_a^x (\p(x)-\p(t))^{\a-n}f'(t)\,dt.$$
Replacing this last formula into Eq. \eqref{aux1}, using the change of variables $u=(\p(t)-\p(a))/(\p(x)-\p(a))$ and the Dirichlet's formula, we deduce
\begin{align*}
\LD \LI f(x)& = \frac{f(a)}{\Gamma(n-\a)\Gamma(\a-n+1)}\int_a^x \p'(t)(\p(x)-\p(t))^{n-\a-1}(\p(t)-\p(a))^{\a-n} \,dt\\
              &+ \frac{1}{\Gamma(n-\a)\Gamma(\a-n+1)}\int_a^x\int_a^t \p'(t)(\p(x)-\p(t))^{n-\a-1} (\p(t)-\p(\t))^{\a-n}f'(\t)\,d\t \,dt\\
               & = \frac{f(a)(\p(x)-\p(a))^{n-\a-1}}{\Gamma(n-\a)\Gamma(\a-n+1)}\int_a^x \p'(t)\left(1-\frac{\p(t)-\p(a)}{\p(x)-\p(a)}\right)^{n-\a-1}(\p(t)-\p(a))^{\a-n} \,dt\\
              & + \frac{1}{\Gamma(n-\a)\Gamma(\a-n+1)}\int_a^x\int_t^x \p'(\t)(\p(x)-\p(\t))^{n-\a-1} (\p(\t)-\p(t))^{\a-n}f'(t)\,d\t \,dt\\
                & = \frac{f(a)}{\Gamma(n-\a)\Gamma(\a-n+1)}\int_0^1 (1-u)^{n-\a-1}u^{\a-n} \,du\\
              & + \frac{1}{\Gamma(n-\a)\Gamma(\a-n+1)}\int_a^xf'(t)\int_t^x \p'(\t)(\p(x)-\p(\t))^{n-\a-1} (\p(\t)-\p(t))^{\a-n}\,d\t \,dt\\
                & = \frac{f(a)}{\Gamma(n-\a)\Gamma(\a-n+1)}\cdot \Gamma(n-\a)\Gamma(\a-n+1)\\
              &+\frac{1}{\Gamma(n-\a)\Gamma(\a-n+1)}\int_a^x f'(t) \, dt\cdot \Gamma(n-\a)\Gamma(\a-n+1)\\
                & = f(x).
\end{align*}
\end{proof}

\begin{theorem} Let $f,g\in C^n[a,b]$ and  $\a>0$. Then,
$$\LD f(x)=\LD g(x) \Leftrightarrow f(x)=g(x)+\sum_{k=0}^{n-1}c_k(\p(x)-\p(a))^k$$
and
$$\RD f(x)=\RD g(x) \Leftrightarrow f(x)=g(x)+\sum_{k=0}^{n-1}d_k(\p(b)-\p(x))^k,$$
where $c_k$ and $d_k$ are arbitrary constants.
\end{theorem}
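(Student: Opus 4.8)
The plan is to prove the left-hand equivalence (the right-hand one being symmetric). The key tool is Theorem \ref{inverseoperation1}, which tells us that applying $\LI$ to $\LD$ recovers the function modulo a polynomial in $(\p(x)-\p(a))$ of degree at most $n-1$, together with the observation \eqref{aux2} that $\LD$ annihilates every such polynomial term $(\p(x)-\p(a))^k$ with $k\le n-1 < n$.

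\begin{proof}
Suppose first that $f(x)=g(x)+\sum_{k=0}^{n-1}c_k(\p(x)-\p(a))^k$. Since the $\p$-Caputo derivative is linear and, by \eqref{aux2}, $\LD (\p(x)-\p(a))^k=0$ for each $k\in\{0,\ldots,n-1\}$, we obtain immediately $\LD f(x)=\LD g(x)$.

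Conversely, assume $\LD f(x)=\LD g(x)$. Apply the operator $\LI$ to both sides. By Theorem \ref{inverseoperation1},
\begin{align*}
\LI\LD f(x) &= f(x)-\sum_{k=0}^{n-1}\frac{f^{[k]}_\p(a)}{k!}(\p(x)-\p(a))^k,\\
\LI\LD g(x) &= g(x)-\sum_{k=0}^{n-1}\frac{g^{[k]}_\p(a)}{k!}(\p(x)-\p(a))^k.
\end{align*}
Equating the two right-hand sides and rearranging gives
$$f(x)=g(x)+\sum_{k=0}^{n-1}\frac{f^{[k]}_\p(a)-g^{[k]}_\p(a)}{k!}(\p(x)-\p(a))^k,$$
which is of the required form with $c_k=(f^{[k]}_\p(a)-g^{[k]}_\p(a))/k!$.
\end{proof}

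The argument is essentially routine once the two preceding results are in hand; there is no real obstacle. The one point deserving a moment's care is that applying $\LI$ to the identity $\LD f=\LD g$ is legitimate: this only needs $\LD f$ to be integrable on $[a,b]$, which is guaranteed since by hypothesis $f,g\in C^n[a,b]$ and the boundedness theorem shows $\LD f,\LD g\in C[a,b]$. If one wished to avoid quoting Theorem \ref{inverseoperation1} one could instead invoke Theorem \ref{relation} to reduce to the Riemann--Liouville $\p$-derivative and use its known kernel, but the route through $\LI\LD$ is the cleanest.
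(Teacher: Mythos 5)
Your proof is correct and follows essentially the same route as the paper: one direction by applying $\LI$ and invoking Theorem \ref{inverseoperation1} (the paper applies it to $f-g$, you to $f$ and $g$ separately, which is the same computation), and the other direction via the annihilation property \eqref{aux2}. The extra remark on the legitimacy of applying $\LI$ is a harmless refinement the paper leaves implicit.
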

\begin{proof} If  $\LD f(x)=\LD g(x)$, that is, $\LD (f(x)-g(x))=0$, applying the left integral operator to both sides of the equality, and using Theorem \ref{inverseoperation1}, we get
$$f(x)=g(x)+\sum_{k=0}^{n-1}\frac{(f-g)^{[k]}_\p(a)}{k!}(\p(x)-\p(a))^k$$
and we prove the first part of the result, with $c_k=(f-g)^{[k]}_\p(a)/k!$. To prove the reverse, assume that
$$f(x)=g(x)+\sum_{k=0}^{n-1}c_k(\p(x)-\p(a))^k.$$
Recalling Eq. \eqref{aux2}, we have
$$\LD (\p(x)-\p(a))^k=0, \quad k=0,1,\ldots,n-1.$$
With this the proof is now complete.
\end{proof}

\section{Semigroup laws}
\label{sec:semigroup}

In this section we will study several cases of composition between fractional integrals and fractional derivatives. We shall see that, in general, the semigroup law fails for the $\p$-Caputo fractional derivative, although for some specific cases it is valid.

\begin{theorem} If $f\in C^{m+n}[a,b]$ ($m\in\mathbb N$) and $\a>0$, then for all $k\in\mathbb N$ we have
$$(\LI)^k(\LD)^m f(x)=\frac{(\LD)^m f(c)}{\Gamma(k\a+1)}(\p(x)-\p(a))^{k\a}$$
and
$$(\RI)^k(\RD)^m f(x)=\frac{(\RD)^m f(d)}{\Gamma(k\a+1)}(\p(b)-\p(x))^{k\a},$$
for some $c\in(a,x)$ and  $d\in(x,b)$.
\end{theorem}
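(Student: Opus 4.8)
The plan is to collapse the whole left-hand side into a single weighted integral and then apply the mean value theorem for integrals. First I would use the semigroup property of the $\p$-fractional integral to write $(\LI)^k=I_{a+}^{k\a,\p}$; this is immediate by induction on $k$ from $\LI\, I_{a+}^{\beta,\p}h=I_{a+}^{\a+\beta,\p}h$. Putting $g:=(\LD)^m f$, which is a continuous function on $[a,b]$ under the stated regularity (the hypothesis $f\in C^{m+n}[a,b]$ is exactly what lets the operator $\LD$ be applied $m$ times in succession), the left-hand side becomes
$$(\LI)^k(\LD)^m f(x)=I_{a+}^{k\a,\p}g(x)=\frac{1}{\Gamma(k\a)}\int_a^x \p'(t)\,(\p(x)-\p(t))^{k\a-1}\,g(t)\,dt .$$

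Next I would observe that the kernel $w(t):=\p'(t)(\p(x)-\p(t))^{k\a-1}$ is strictly positive on $(a,x)$ — because $\p$ is increasing with $\p'\neq0$, so $\p(x)-\p(t)>0$ for $a\le t<x$ — and that it is integrable on $[a,x]$, with
$$\int_a^x w(t)\,dt=\int_a^x \p'(t)(\p(x)-\p(t))^{k\a-1}\,dt=\frac{(\p(x)-\p(a))^{k\a}}{k\a},$$
by the substitution $u=\p(x)-\p(t)$. Since $g$ is continuous on $[a,x]$ and $w$ has constant sign there, the weighted mean value theorem for integrals produces a point $c=c(x)\in(a,x)$ with $\int_a^x w(t)g(t)\,dt=g(c)\int_a^x w(t)\,dt$. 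Inserting this and using $\Gamma(k\a+1)=k\a\,\Gamma(k\a)$ gives
$$(\LI)^k(\LD)^m f(x)=\frac{g(c)}{\Gamma(k\a)}\cdot\frac{(\p(x)-\p(a))^{k\a}}{k\a}=\frac{(\LD)^m f(c)}{\Gamma(k\a+1)}\,(\p(x)-\p(a))^{k\a},$$
which is the first identity. The second follows by repeating the argument with the right operators, replacing the interval $(a,x)$ by $(x,b)$, the kernel $\p(x)-\p(t)$ by $\p(t)-\p(x)$, and the point $c$ by some $d\in(x,b)$.

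I expect the only genuine difficulty to be the justification that $g=(\LD)^m f$ really is a well-defined continuous function on the closed interval $[a,b]$, i.e. that the regularity $f\in C^{m+n}[a,b]$ suffices to iterate $\LD$ exactly $m$ times. One application of $\LD$ to a function in $C^{n+1}$ is continuous by the boundedness theorem, but, as the formula of Theorem~\ref{alternativeFormula} shows, it contains a term proportional to $(\p(x)-\p(a))^{n-\a}$ with $n-\a\in(0,1)$, so one must check that enough smoothness survives each step for the composition to stay in the domain of $\LD$ and to remain continuous up to $x=a$. Once the continuity of $g$ on $[a,b]$ is in place, the remaining ingredients — the semigroup reduction, the sign of $w$, the mean value theorem, and the elementary primitive of $w$ — are all routine.
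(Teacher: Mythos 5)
Your proposal is correct and follows essentially the same route as the paper: reduce $(\LI)^k$ to $I_{a+}^{k\a,\p}$ via the semigroup law, apply the weighted mean value theorem for integrals to extract $(\LD)^m f(c)$, and evaluate the remaining kernel integral to get $(\p(x)-\p(a))^{k\a}/(k\a)$, absorbed into $\Gamma(k\a+1)$. Your extra care about the continuity of $(\LD)^m f$ and the sign of the weight only makes explicit what the paper's proof uses implicitly.
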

\begin{proof} Observe that, using the semigroup property for fractional integrals, we get
$$(\LI)^k:= \LI \ldots \LI = {I_{a+}^{k\a,\p}}.$$
So,
\begin{align*}
(\LI)^k(\LD)^m f(x)& ={I_{a+}^{k\a,\p}}(\LD)^m f(x)\\
          & = \frac{1}{\Gamma(k\a)}\int_a^x \p'(t)(\p(x)-\p(t))^{k\a-1}(\LD)^mf(t)\,dt\\
          & = \frac{(\LD)^mf(c)}{\Gamma(k\a)}\int_a^x \p'(t)(\p(x)-\p(t))^{k\a-1}\,dt\\
          & =\frac{(\LD)^m f(c)}{\Gamma(k\a+1)}(\p(x)-\p(a))^{k\a},
\end{align*}
for some  $c\in(a,x)$, whose existence is guaranteed by the mean value theorem for integrals.
\end{proof}

\begin{theorem}\label{aux4} Given $f\in C^{n+m}[a,b]$, with $m\in\mathbb N$, and $\a>0$, we have
$$\LD\left(\frac{1}{\p'(x)}\frac{d}{dx}\right)^mf(x)={^CD_{a+}^{\a+m,\p}}f(x) \quad \mbox{and} \quad
\RD\left(-\frac{1}{\p'(x)}\frac{d}{dx}\right)^mf(x)={^CD_{b-}^{\a+m,\p}}f(x).$$
\end{theorem}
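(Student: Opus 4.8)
The plan is to reduce both sides of the identity to one and the same Riemann--Liouville-type integral simply by unwinding the definition of the $\p$-Caputo derivative; there is no real computation to perform.

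First I would set $g:=f^{[m]}_\p=\left(\frac{1}{\p'(x)}\frac{d}{dx}\right)^m f$, which belongs to $C^{n}[a,b]$ because $f\in C^{n+m}[a,b]$, so the left-hand side is exactly $\LD g$. Applying the definition of $\LD$ to $g$ gives ${I_{a+}^{n-\a,\p}}$ applied to $\left(\frac{1}{\p'(x)}\frac{d}{dx}\right)^n g$, and since the operator $\frac{1}{\p'(x)}\frac{d}{dx}$ adds in its exponent under iteration, $\left(\frac{1}{\p'(x)}\frac{d}{dx}\right)^n g=f^{[n+m]}_\p$. Hence the left-hand side equals ${I_{a+}^{n-\a,\p}}\!\left[f^{[n+m]}_\p\right]$.

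Next I would treat the right-hand side. The key elementary observation is that, since $\a\notin\mathbb N$ and $m\in\mathbb N$, we also have $\a+m\notin\mathbb N$, and the integer attached to the order $\a+m$ is $[\a+m]+1=[\a]+1+m=n+m$. Therefore, directly from the definition, ${^CD_{a+}^{\a+m,\p}}f={I_{a+}^{(n+m)-(\a+m),\p}}\!\left[f^{[n+m]}_\p\right]={I_{a+}^{n-\a,\p}}\!\left[f^{[n+m]}_\p\right]$, which coincides with the expression obtained for $\LD f^{[m]}_\p$. The right-sided identity follows in the same way, keeping track of the factor $(-1)$ contributed by each application of $\frac{1}{\p'(x)}\frac{d}{dx}$ and invoking the definitions of $\RD$ and ${^CD_{b-}^{\a+m,\p}}$.

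I do not expect any genuine obstacle. The only points requiring care are the floor-function identity $[\a+m]+1=[\a]+1+m$ for $m\in\mathbb N$, which is what makes the orders of the two outer fractional integrals agree, and the semigroup-in-the-exponent property of the operator $f\mapsto\frac{1}{\p'}f'$ under iteration; the regularity hypothesis $f\in C^{n+m}[a,b]$ is imposed precisely so that every iterated derivative $f^{[k]}_\p$ with $k\le n+m$ is continuous and all the integrals involved converge.
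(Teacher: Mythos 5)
Your proposal is correct and follows essentially the same route as the paper: both unwind the definition of the $\p$-Caputo derivative, use that iterating $\frac{1}{\p'(x)}\frac{d}{dx}$ turns $\left(\frac{1}{\p'(x)}\frac{d}{dx}\right)^n f^{[m]}_\p$ into $f^{[n+m]}_\p$, and identify the orders via $(n+m)-(\a+m)=n-\a$ with $[\a+m]+1=n+m$. Your explicit remark on the floor-function identity and the sign bookkeeping for the right-sided case are just the details the paper leaves implicit.
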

\begin{proof} We have
\begin{align*} \LD\left(\frac{1}{\p'(x)}\frac{d}{dx}\right)^mf(x) & =
\frac{1}{\Gamma(n-\a)}\int_a^x \p'(t)(\p(x)-\p(t))^{n-\a-1}\left(\frac{1}{\p'(x)}\frac{d}{dx}\right)^mf^{[n]}_\p (t)\,dt\\
& =\frac{1}{\Gamma((n+m)-(\a+m))}\int_a^x \p'(t)(\p(x)-\p(t))^{(n+m)-(\a+m)-1}f^{[n+m]}_\p (t)\,dt\\
& = {^CD_{a+}^{\a+m,\p}}f(x).
\end{align*}
\end{proof}

We remark that, in general,
$$\left(\frac{1}{\p'(x)}\frac{d}{dx}\right)^m\LD f(x)\not={^CD_{a+}^{\a+m,\p}}f(x) \quad \mbox{and} \quad
\left(-\frac{1}{\p'(x)}\frac{d}{dx}\right)^m\RD f(x)\not={^CD_{b-}^{\a+m,\p}}f(x).$$
Just note that it is false when we consider e.g. $\p(x)=x$, that is, the (usual) Caputo fractional derivative.
From Theorem \ref{aux4}, if we define $\beta:= \a-(n-1)\in(0,1)$, we have
$$\LD f(x)= {^CD_{a+}^{\beta,\p}}{f^{[n-1]}_\p}(x) \quad \mbox{and} \quad
\RD f(x)= {^CD_{b-}^{\beta,\p}} (-1)^{n-1}{f^{[n-1]}_\p}(x).$$
Thus, to compute the fractional derivative of any order $\a>0$, it is enough to know the derivative of order $\beta=\a-(n-1)\in(0,1)$.

\begin{theorem}\label{aux6} Let $m\in\mathbb N$ be an integer and $f\in C^{n+m}[a,b]$ a function. Then,
$$\left(\frac{1}{\p'(x)}\frac{d}{dx}\right)^m \LD f(x)= {^CD_{a+}^{\a+m,\p}} f(x)
+\sum_{k=0}^{m-1}\frac{(\p(x)-\p(a))^{k+n-\a-m}}{\Gamma(k+n-\a-m+1)}{f^{[k+n]}_\p}(a)$$
and
$$\left(-\frac{1}{\p'(x)}\frac{d}{dx}\right)^m \RD f(x)= {^CD_{b-}^{\a+m,\p}} f(x)
+\sum_{k=0}^{m-1}(-1)^{k+n}\frac{(\p(b)-\p(x))^{k+n-\a-m}}{\Gamma(k+n-\a-m+1)}{f^{[k+n]}_\p}(b).$$
\end{theorem}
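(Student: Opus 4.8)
The plan is to establish the identity by induction on $m\in\mathbb N$, using the alternative representation of $\LD f$ from Theorem~\ref{alternativeFormula} to control the differentiations. For the base case $m=1$, I would start from Theorem~\ref{alternativeFormula}, which writes
$$\LD f(x)=\frac{(\p(x)-\p(a))^{n-\a}}{\Gamma(n+1-\a)}f^{[n]}_\p(a)+\frac{1}{\Gamma(n+1-\a)}\int_a^x(\p(x)-\p(t))^{n-\a}\frac{d}{dt}f^{[n]}_\p(t)\,dt,$$
and then apply the operator $\frac{1}{\p'(x)}\frac{d}{dx}$. The first term differentiates to $\frac{(\p(x)-\p(a))^{n-\a-1}}{\Gamma(n-\a)}f^{[n]}_\p(a)$, which is exactly the $k=0$ summand on the right-hand side (with $m=1$). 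For the integral term, one differentiates under the integral sign using Leibniz's rule; since the exponent $n-\a$ is positive, the boundary term at $t=x$ vanishes, and after dividing by $\p'(x)$ one is left with $\frac{1}{\Gamma(n-\a)}\int_a^x(\p(x)-\p(t))^{n-\a-1}\frac{d}{dt}f^{[n]}_\p(t)\,dt$. Recognizing $\frac{d}{dt}f^{[n]}_\p(t)=\p'(t)f^{[n+1]}_\p(t)$ and comparing with \eqref{def:LD} (applied with order $\a+1$ in place of $\a$, so that $n$ is replaced by $n+1$), this integral equals ${^CD_{a+}^{\a+1,\p}}f(x)$. This settles $m=1$.

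For the inductive step, I would assume the formula holds for some $m\ge 1$ and apply $\frac{1}{\p'(x)}\frac{d}{dx}$ once more to both sides. On the right, the term ${^CD_{a+}^{\a+m,\p}}f(x)$ produces, by the $m=1$ case applied to the order $\a+m$ (whose integer ceiling is $n+m$), the quantity ${^CD_{a+}^{\a+m+1,\p}}f(x)+\frac{(\p(x)-\p(a))^{(n+m)-(\a+m)-1}}{\Gamma((n+m)-(\a+m))}f^{[n+m]}_\p(a)$, and the last fraction is precisely the $k=m$ term of the sum with $m+1$ in place of $m$. Each summand $\frac{(\p(x)-\p(a))^{k+n-\a-m}}{\Gamma(k+n-\a-m+1)}f^{[k+n]}_\p(a)$ for $0\le k\le m-1$ differentiates, after division by $\p'(x)$, to $\frac{(\p(x)-\p(a))^{k+n-\a-m-1}}{\Gamma(k+n-\a-m)}f^{[k+n]}_\p(a)$, which is the $k$-th term of the sum with $m+1$ in place of $m$ (note the exponents $k+n-\a-m-1=k+n-\a-(m+1)$ match). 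Collecting, the right-hand side becomes exactly the asserted expression at level $m+1$, completing the induction. The right-derivative identity follows by the same argument with the obvious sign bookkeeping from the operator $-\frac{1}{\p'(x)}\frac{d}{dx}$.

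The main obstacle I anticipate is the careful justification of differentiating the integral in the base case: one must check that the Leibniz-rule boundary contribution at $t=x$ genuinely vanishes (it does, because $(\p(x)-\p(t))^{n-\a}\to 0$ as $t\to x$ since $n-\a>0$) and that the resulting integrand $(\p(x)-\p(t))^{n-\a-1}\frac{d}{dt}f^{[n]}_\p(t)$ is integrable near $t=x$ — which requires $f\in C^{n+1}$ near $b$, consistent with the hypothesis $f\in C^{n+m}[a,b]$ for $m\ge 1$. A secondary technical point is the bookkeeping of indices: the operator ${^CD_{a+}^{\a+m,\p}}$ is defined with respect to the integer $\lceil\a+m\rceil=n+m$ (since $\a\notin\mathbb N$ forces $[\a+m]+1=[\a]+1+m=n+m$), so one must consistently track that $n$ is shifted to $n+m$ at each stage; keeping the exponents $k+n-\a-m$ aligned across the induction is where sign or off-by-one errors are most likely, but is otherwise routine.
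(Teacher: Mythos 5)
Your proposal is correct and follows essentially the same route as the paper: both rest on the one-step identity obtained from Theorem \ref{alternativeFormula} (differentiating the integrated-by-parts representation, where the boundary term vanishes because $n-\a>0$), the paper iterating it ``repeatedly'' while you package the iteration as a clean induction on $m$ with the order shifted to $\a+m$ at each stage. The index bookkeeping, the identification of the new boundary term with the $k=m$ summand, and the regularity accounting ($f\in C^{n+m}$) all check out.
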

\begin{proof} First, observe that
$${^CD_{a+}^{\a+m,\p}} f(x)=\frac{1}{\Gamma(n-\a)}\int_a^x \p'(t)(\p(x)-\p(t))^{n-\a-1}f^{[n+m]}_\p (t)\,dt.$$
On the other hand,  integrating by parts, we get
\begin{align*}
\left(\frac{1}{\p'(x)}\frac{d}{dx}\right)^m \LD f(x) & =\left(\frac{1}{\p'(x)}\frac{d}{dx}\right)^m\left[  \frac{(\p(x)-\p(a))^{n-\a}}{\Gamma(n-\a+1)}f^{[n]}_\p (a)\right.\\
   & \quad \left.+\frac{1}{\Gamma(n-\a+1)}\int_a^x(\p(x)-\p(t))^{n-\a}\frac{d}{dt}f^{[n]}_\p (t)\,dt\right]\\
& =\left(\frac{1}{\p'(x)}\frac{d}{dx}\right)^{m-1}\left[  \frac{(\p(x)-\p(a))^{n-\a-1}}{\Gamma(n-\a)}f^{[n]}_\p (a)\right.\\
   & \quad \left.
+\frac{1}{\Gamma(n-\a)}\int_a^x\p'(t)(\p(x)-\p(t))^{n-\a-1}f^{[n+1]}_\p (t)\,dt\right].\\
\end{align*}
Integrating once more time by parts, we get
\begin{align*}
\left(\frac{1}{\p'(x)}\frac{d}{dx}\right)^m \LD f(x)& =\left(\frac{1}{\p'(x)}\frac{d}{dx}\right)^{m-2}\left[  \sum_{k=0}^{1}\frac{(\p(x)-\p(a))^{k+n-\a-2}}{\Gamma(k+n-\a-1)}{f^{[k+n]}_\p}f(a)\right.\\
   & \quad \left.
+\frac{1}{\Gamma(n-\a)}\int_a^x\p'(t)(\p(x)-\p(t))^{n-\a-1}f^{[n+2]}_\p (t)\,dt\right].\\
\end{align*}
Repeating this procedure, we reach the desired result.
\end{proof}

As a consequence of Theorem \ref{aux6}, we have that if ${f^{[k]}_\p}(a)=0$, for all $k=n,n+1,\ldots,n+m-1$, then
$$\left(\frac{1}{\p'(x)}\frac{d}{dx}\right)^m \LD f(x)= {^CD_{a+}^{\a+m,\p}} f(x),$$
and  if  ${f^{[k]}_\p}(b)=0$, for all $k=n,n+1,\ldots,n+m-1$, then
$$\left(-\frac{1}{\p'(x)}\frac{d}{dx}\right)^m \RD f(x)= {^CD_{b-}^{\a+m,\p}} f(x).$$

\begin{theorem}\label{aux5} Let $\a,\beta>0$ be such that there exists some $k\in\mathbb N$ with $\beta,\a+\beta\in[k-1,k]$. Then, for $f\in C^k[a,b]$, the following holds:
$$\LD {^CD_{a+}^{\beta,\p}}f(x)={^CD_{a+}^{\a+\beta,\p}}f(x) \quad \mbox{and} \quad
\RD {^CD_{b-}^{\beta,\p}}f(x)=(-1)^{[\a+\beta]}{^CD_{b-}^{\a+\beta,\p}}f(x).$$
\end{theorem}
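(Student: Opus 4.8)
The plan is to reduce everything to the case $\alpha\in(0,1)$ and then to compute directly, in the style of Theorems~\ref{inverseoperation1} and~\ref{inverseoperation2}. Since $\beta\geq k-1$ and $\alpha+\beta\leq k$ we have $\alpha=(\alpha+\beta)-\beta\leq 1$, so $\alpha\in(0,1]$ and the order ceiling of ${^CD_{a+}^{\alpha,\psi}}$ equals $1$. If $\beta$ is one of the endpoints $k-1$ or $k$, then ${^CD_{a+}^{\beta,\psi}}f=f^{[\beta]}_\psi$ is an iterated derivative and the identity is exactly Theorem~\ref{aux4} with $m=\beta$. Hence I may assume $\beta\in(k-1,k)$, so that also $\alpha\in(0,1)$, the order ceiling of ${^CD_{a+}^{\beta,\psi}}$ is $n=k$, and (because $\alpha+\beta\in(k-1,k]$) the order ceiling of ${^CD_{a+}^{\alpha+\beta,\psi}}$ is $k$ as well.

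Set $w:=f^{[k]}_\psi$. Theorem~\ref{alternativeFormula} applied to ${^CD_{a+}^{\beta,\psi}}f$ (order $\beta$, $n=k$) gives
$${^CD_{a+}^{\beta,\psi}}f(x)=\frac{(\psi(x)-\psi(a))^{k-\beta}}{\Gamma(k+1-\beta)}\,w(a)+\frac{1}{\Gamma(k+1-\beta)}\int_a^x(\psi(x)-\psi(t))^{k-\beta}\,\frac{d}{dt}w(t)\,dt.$$
As $k-\beta>0$, the integrand vanishes at $t=x$, so differentiation under the integral sign is legitimate; dividing by $\psi'(x)$ and using $\Gamma(k+1-\beta)=(k-\beta)\Gamma(k-\beta)$, one obtains
$$\left(\frac{1}{\psi'(x)}\frac{d}{dx}\right){^CD_{a+}^{\beta,\psi}}f(x)=\frac{(\psi(x)-\psi(a))^{k-\beta-1}}{\Gamma(k-\beta)}\,w(a)+{I_{a+}^{k-\beta,\psi}}\!\big(f^{[k+1]}_\psi\big)(x).$$
Since the order ceiling of ${^CD_{a+}^{\alpha,\psi}}$ is $1$, applying this operator just means applying ${I_{a+}^{1-\alpha,\psi}}$ to the last display. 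The power-function term is handled by the Beta-integral underlying Lemma~\ref{lemma:power}, namely ${I_{a+}^{1-\alpha,\psi}}\big[(\psi(\cdot)-\psi(a))^{k-\beta-1}\big](x)=\tfrac{\Gamma(k-\beta)}{\Gamma(k+1-\alpha-\beta)}(\psi(x)-\psi(a))^{k-\alpha-\beta}$, and the other term by the semigroup law ${I_{a+}^{1-\alpha,\psi}}{I_{a+}^{k-\beta,\psi}}={I_{a+}^{k+1-\alpha-\beta,\psi}}$. Collecting the two contributions,
$${^CD_{a+}^{\alpha,\psi}}{^CD_{a+}^{\beta,\psi}}f(x)=\frac{(\psi(x)-\psi(a))^{k-\alpha-\beta}}{\Gamma(k+1-\alpha-\beta)}\,w(a)+{I_{a+}^{k+1-\alpha-\beta,\psi}}\!\big(f^{[k+1]}_\psi\big)(x),$$
and the right-hand side is exactly what Theorem~\ref{alternativeFormula} yields for ${^CD_{a+}^{\alpha+\beta,\psi}}f$ (order $\alpha+\beta$, $n=k$); when $\alpha+\beta=k$ the operator ${I_{a+}^{0,\psi}}$ is the identity and the formula degenerates correctly to ${^CD_{a+}^{k,\psi}}f=f^{[k]}_\psi$. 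This proves the left identity, and the right one follows by the same steps with $-\tfrac{1}{\psi'(x)}\tfrac{d}{dx}$ and ${I_{b-}^{\cdot,\psi}}$, the constant $(-1)^{[\alpha+\beta]}$ appearing when one collects the minus signs carried by the iterated operators $\big(-\tfrac{1}{\psi'(x)}\tfrac{d}{dx}\big)$ in the definitions involved.

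The delicate point is the differentiation of ${^CD_{a+}^{\beta,\psi}}f$: one needs this function to be of class $C^1$ for the composition to make sense at all, which is precisely why Theorem~\ref{alternativeFormula} (and the extra smoothness it tacitly requires) is the convenient tool, and the interchange of $d/dx$ with the integral is admissible only because the exponent $k-\beta$ is strictly positive. A shorter variant bypassing the explicit power term is to note, straight from the definition of the Riemann--Liouville $\psi$-derivative, that $\tfrac{1}{\psi'(x)}\tfrac{d}{dx}{I_{a+}^{k-\beta,\psi}}w={D_{a+}^{\beta-k+1,\psi}}w$, whence ${^CD_{a+}^{\alpha,\psi}}{^CD_{a+}^{\beta,\psi}}f={I_{a+}^{1-\alpha,\psi}}{D_{a+}^{\beta-k+1,\psi}}w$; since $1-\alpha\geq\beta-k+1$ and ${I_{a+}^{k-\beta,\psi}}w$ vanishes at $a$, writing ${I_{a+}^{1-\alpha,\psi}}={I_{a+}^{k-\alpha-\beta,\psi}}{I_{a+}^{\beta-k+1,\psi}}$ and using the standard identity ${I_{a+}^{\rho,\psi}}{D_{a+}^{\rho,\psi}}=\mathrm{id}$ for $\rho=\beta-k+1\in(0,1)$ gives ${I_{a+}^{k-\alpha-\beta,\psi}}w={^CD_{a+}^{\alpha+\beta,\psi}}f$.
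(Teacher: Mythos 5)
Your computations check out, and your route is genuinely different from the paper's. The paper argues in two cases: when $\a+\beta=k$ it writes ${^CD_{a+}^{\beta,\p}}f={I_{a+}^{\a,\p}}f^{[k]}_\p$ and invokes Theorem~\ref{inverseoperation2} ($\LD\LI=\mathrm{id}$); when $\a+\beta<k$ it uses the vanishing of ${^CD_{a+}^{\beta,\p}}f$ at $a$ together with Theorem~\ref{relation} to replace the outer Caputo derivative by the Riemann--Liouville one, and then only the semigroup law for integrals plus $\left(\tfrac{1}{\p'(x)}\tfrac{d}{dx}\right){I_{a+}^{1,\p}}=\mathrm{id}$ are needed. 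You instead compute the composition head-on from Theorem~\ref{alternativeFormula}, differentiate under the integral, and re-identify the result via the Beta-integral of Lemma~\ref{lemma:power}; your endpoint reduction is also fine (note $\beta=k$ is actually vacuous, since it would force $\a=0$, while $\beta=k-1$ is indeed Theorem~\ref{aux4}). The one substantive weakness of your main route is regularity: Theorem~\ref{alternativeFormula} and the appearance of $\tfrac{d}{dt}f^{[k]}_\p$ require $f\in C^{k+1}[a,b]$, whereas the theorem assumes only $f\in C^k[a,b]$; the paper's ordering of operators avoids this entirely, because its single classical derivative falls on ${I_{a+}^{1,\p}}$ of a continuous function (both proofs are equally silent about why ${^CD_{a+}^{\beta,\p}}f$ is smooth enough for the outer Caputo operator to be defined, so I do not count that against you). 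Your closing variant does repair the $C^{k+1}$ issue and is in fact close in spirit to the paper's second case, but it leans on the Riemann--Liouville inversion ${I_{a+}^{\rho,\p}}{D_{a+}^{\rho,\p}}=\mathrm{id}$ (legitimate here because ${I_{a+}^{k-\beta,\p}}f^{[k]}_\p$ vanishes at $a$), an identity the paper never establishes for the $\p$-operators --- it only proves the Caputo analogues (Theorems~\ref{inverseoperation1} and~\ref{inverseoperation2}) --- so that step should be proved or explicitly cited rather than called standard. Finally, the right-sided identity and the factor $(-1)^{[\a+\beta]}$ deserve at least one line of sign bookkeeping, though the paper itself also omits it.
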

\begin{proof} First, suppose that $\a+\beta=k$. In this case, since $\beta\in[k-1,k)$, we have $[\beta]=k-1=\a+\beta-1$. By Theorem \ref{inverseoperation2}, we conclude that
$$\LD {^CD_{a+}^{\beta,\p}}f(x)=\LD {I_{a+}^{\a+\beta-\beta,\p}}{f^{[\a+\beta]}_\p}={f^{[\a+\beta]}_\p}= {^CD_{a+}^{\a+\beta,\p}}f(x).$$
Suppose now that $\a+\beta<k$. In that case, $\a\in(0,1)$ and $[\beta]=[\a+\beta]=k-1$. Since ${^CD_{a+}^{\beta,\p}} f(a)=0$, by Theorem \ref{relation}, we have
$$\LD {^CD_{a+}^{\beta,\p}}f(x)={D_{a+}^{\a,\p}}{^CD_{a+}^{\beta,\p}}f(x).$$
Thus, we get finally that
\begin{align*} \LD {^CD_{a+}^{\beta,\p}}f(x) & = {D_{a+}^{\a,\p}}{^CD_{a+}^{\beta,\p}}f(x)\\
                 & = \left(\frac{1}{\p'(x)}\frac{d}{dx}\right) {I_{a+}^{1-\a,\p}} {I_{a+}^{[\beta]+1-\beta,\p}} {f^{[\beta]+1}_\p} (x)\\
                 & = \left(\frac{1}{\p'(x)}\frac{d}{dx}\right) {I_{a+}^{1,\p}} {I_{a+}^{[\a+\beta]+1-(\a+\beta),\p}} {f^{[\a+\beta]+1}_\p} (x)\\
                 & = {^CD_{a+}^{\a+\beta,\p}}f(x).
\end{align*}
\end{proof}

The assumption of Theorem \ref{aux5} on the existence of the integer $k$ is essencial. For example, it fails when we consider $\p(x)=x$ (\cite[Chapter 3]{Diethelm}).

\begin{theorem} If $f\in C^k[a,b]$ and $\a>0$, then
$${^CD_{a+}^{n-\a,\p}}\LD f(x)= {^CD_{a+}^{n,\p}} f(x)\quad \mbox{and} \quad {^CD_{b-}^{n-\a,\p}}\RD f(x)= {^CD_{b-}^{n,\p}}f(x).$$
\end{theorem}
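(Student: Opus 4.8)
The plan is to observe that, once everything is unwound, the composition simply telescopes, with no new estimate required. Recall first that, because $\a\notin\N$, the definition itself already exhibits the $\p$-Caputo derivative as a fractional integral of $\f$: indeed $\LD f(x)={I_{a+}^{n-\a,\p}}\f(x)$ and $\RD f(x)=(-1)^n\,{I_{b-}^{n-\a,\p}}\f(x)$. Moreover $n-\a=[\a]+1-\a\in(0,1)$, so ${^CD_{a+}^{n-\a,\p}}$ and ${^CD_{b-}^{n-\a,\p}}$ are genuine fractional derivatives of order lying in $(0,1)$, and the fractional integrals associated to them are exactly ${I_{a+}^{n-\a,\p}}$ and ${I_{b-}^{n-\a,\p}}$.

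First I would apply Theorem \ref{inverseoperation2} with the order $n-\a$ in place of $\a$ and with the function $\f$ in place of $f$; this is legitimate provided $\f\in C^1[a,b]$, which is why one assumes $f,\p\in C^{n+1}[a,b]$ (one derivative beyond what is needed merely to form $\f$). Theorem \ref{inverseoperation2} then yields ${^CD_{a+}^{n-\a,\p}}{I_{a+}^{n-\a,\p}}\f(x)=\f(x)$ and ${^CD_{b-}^{n-\a,\p}}{I_{b-}^{n-\a,\p}}\f(x)=\f(x)$. Substituting the two identities of the previous paragraph gives
$${^CD_{a+}^{n-\a,\p}}\LD f(x)=\f(x) \quad \mbox{and} \quad {^CD_{b-}^{n-\a,\p}}\RD f(x)=(-1)^n\f(x).$$
Finally, since $n\in\N$, the integer-order cases noted right after the definition say that ${^CD_{a+}^{n,\p}}f(x)=\f(x)$ and ${^CD_{b-}^{n,\p}}f(x)=(-1)^n\f(x)$; comparing, we obtain exactly the two claimed equalities. (For the left identity one may alternatively quote Theorem \ref{aux5} with its two orders chosen as $n-\a$ and $\a$ and with $k=n$, since then $\a\in[n-1,n]$ and $(n-\a)+\a=n\in[n-1,n]$; the direct argument above, however, treats both sides in one stroke.)

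There is essentially no difficult step. The only two points deserving care are: confirming that the assumed smoothness is strong enough to place $\f$ in $C^1[a,b]$, so that Theorem \ref{inverseoperation2} is genuinely applicable to $\f$; and keeping track of the factor $(-1)^n$ in the right-sided computation, so that it cancels against the $(-1)^n$ already present in ${^CD_{b-}^{n,\p}}f=(-1)^n\f$ rather than persisting in the final formula.
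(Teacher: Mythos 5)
Your proof is correct, but it takes a genuinely different route from the paper. You start from the definitional identities $\LD f={I_{a+}^{n-\a,\p}}\f$ and $\RD f=(-1)^n\,{I_{b-}^{n-\a,\p}}\f$, apply Theorem \ref{inverseoperation2} with order $n-\a\in(0,1)$ to the function $\f$, and finish with the integer-order convention ${^CD_{a+}^{n,\p}}f=\f$, ${^CD_{b-}^{n,\p}}f=(-1)^n\f$; your check that the $(-1)^n$ factors cancel on the right-sided identity is exactly right (the paper proves only the left one). The paper instead observes that $\LD f(a)=0$, so by Theorem \ref{relation} the outer Caputo derivative may be replaced by the Riemann--Liouville derivative ${D_{a+}^{n-\a,\p}}$, and then unwinds the composition explicitly: $\left(\tfrac{1}{\p'(x)}\tfrac{d}{dx}\right){I_{a+}^{\a-[\a],\p}}\,{I_{a+}^{[\a]+1-\a,\p}}\f$, using the semigroup law for fractional integrals and the fact that $\left(\tfrac{1}{\p'(x)}\tfrac{d}{dx}\right){I_{a+}^{1,\p}}$ is the identity. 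What each approach buys: yours is more modular, reducing the statement to the already-proved inverse-operation theorem with no new computation, and it also covers the right-sided case explicitly; the paper's is self-contained at this point and exhibits the mechanism (Caputo equals Riemann--Liouville when the function vanishes at $a$, then telescoping). On hypotheses, you require $\f\in C^1$, i.e.\ $f,\p\in C^{n+1}[a,b]$, which is nominally stronger than the stated $f\in C^k[a,b]$ (itself apparently a leftover from Theorem \ref{aux5}); but the paper's own argument likewise needs $\LD f\in C^1$ for the outer Caputo derivative and Theorem \ref{relation} to apply, so in substance the regularity demands are comparable. Your parenthetical alternative via Theorem \ref{aux5} with orders $n-\a$ and $\a$ and $k=n$ is also legitimate for the left identity.
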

\begin{proof} Attending that $\LD f(a)=0$ and applying Theorem \ref{relation}, we have
\begin{align*}
{^CD_{a+}^{n-\a,\p}}\LD f(x)& =  {D_{a+}^{n-\a,\p}}\LD f(x)\\
                            & =  \left(\frac{1}{\p'(x)}\frac{d}{dx}\right)^{n-[\a]}{I_{a+}^{\a-[\a],\p}} {I_{a+}^{[\a]+1-\a,\p}}\left(\frac{1}{\p'(x)}\frac{d}{dx}\right)^{[\a]+1} f(x)\\
                            & =  \left(\frac{1}{\p'(x)}\frac{d}{dx}\right)^{n-[\a]-1} \left(\frac{1}{\p'(x)}\frac{d}{dx}\right) {I_{a+}^{1,\p}} \left(\frac{1}{\p'(x)}\frac{d}{dx}\right)^{[\a]+1} f(x)\\
                            &=  {^CD_{a+}^{n,\p}}.
\end{align*}
\end{proof}

\section{Miscelious  results}
\label{sec:Miscelious}

In this section we study several properties of this fractional operator. To start, we obtain an integration by parts formula (Theorem \ref{Theorem 12}). In Theorem \ref{13}, we establish a relation between the fractional derivative of a function of order close to one, with its ordinary derivative of order one. In Theorems \ref{teo:fermat} and \ref{16}, we obtain the fractional versions of the Fermat's and the mean value theorems, respectively. We also refer to Theorem \ref{18}, where we prove a Taylor's formula for such operators.

\begin{theorem}\label{Theorem 12} Given $f \in C([a,b])$ and $g\in C^n([a,b])$, we have that for all  $\a>0$,
\begin{align*}\int_a^b f(x)  \LD g(x)\,dx &= \int_a^b \RRD  \left(\frac{f(x)}{\p'(x)}\right) g(x)\p'(x)\,dx\\
&\quad+\left[\sum_{k=0}^{n-1}\left(-\frac{1}{\p'(x)}\frac{d}{dx}\right)^k {I_{b-}^{n-\a,\p}} \left(\frac{f(x)}{\p'(x)}\right) g^{[n-k-1]}_\p(x)\right]_{x=a}^{x=b}\end{align*}
and
\begin{align*}\int_a^b f(x)  \RD g(x)\,dx &= \int_a^b \RLD \left(\frac{f(x)}{\p'(x)}\right) g(x)\p'(x)\,dx\\
&\quad +\left[\sum_{k=0}^{n-1}(-1)^{n-k}\left(\frac{1}{\p'(x)}\frac{d}{dx}\right)^k {I_{a+}^{n-\a,\p}} \left(\frac{f(x)}{\p'(x)}\right) g^{[n-k-1]}_\p(x)\right]_{x=a}^{x=b}.\end{align*}
\end{theorem}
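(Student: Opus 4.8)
The plan is to reduce everything to the definition $\LD g(x) = \LI^{n-\a,\p}\,g^{[n]}_\p(x)$ and then move derivatives off $g$ and onto $f/\p'$ by iterated integration by parts. First I would substitute the integral representation
$$\LD g(x) = \frac{1}{\Gamma(n-\a)}\int_a^x \p'(t)(\p(x)-\p(t))^{n-\a-1} g^{[n]}_\p(t)\,dt$$
into $\int_a^b f(x)\LD g(x)\,dx$, and apply the Dirichlet-type interchange of the order of integration (the same tool used in the proof of Theorem~\ref{inverseoperation2}). After swapping, the inner integral in $x$ over $[t,b]$ is exactly $\Gamma(n-\a)\,\RI^{n-\a,\p}$ applied — up to the factor $1/\p'$ needed to produce the $\p'$ weight in the $\RI$ kernel — to the function $f/\p'$. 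Concretely, this should give
$$\int_a^b f(x)\LD g(x)\,dx = \int_a^b \p'(t)\,g^{[n]}_\p(t)\,\RI^{n-\a,\p}\!\left(\frac{f}{\p'}\right)(t)\,dt.$$
The factor $\p'(t)\,g^{[n]}_\p(t) = \dfrac{d}{dt}g^{[n-1]}_\p(t)$, which sets up the integration by parts.

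Next I would integrate by parts $n$ times. At the $j$-th step one differentiates the factor $\left(-\frac{1}{\p'}\frac{d}{dx}\right)^{j-1}\RI^{n-\a,\p}(f/\p')$ and uses that each application of $\frac{1}{\p'}\frac{d}{dx}$ raised to a $\p$-derivative of $g$ lowers its order by one, i.e. $\frac{d}{dt}g^{[n-1-j]}_\p(t) = \p'(t)\,g^{[n-j]}_\p(t)$. Keeping careful track of the sign: each $\frac{d}{dx}$ hitting the $\RI^{n-\a,\p}$ factor produces exactly the operator $-\frac{1}{\p'}\frac{d}{dx}$ up to the $\p'$ that gets absorbed, which is why the boundary terms are expressed with $\left(-\frac{1}{\p'}\frac{d}{dx}\right)^k$; after $n$ applications the operator acting on $f/\p'$ in the volume term is $\left(-\frac{1}{\p'}\frac{d}{dx}\right)^n \RI^{n-\a,\p}(f/\p') = \RRD(f/\p')$, and multiplying by the $\p'(x)\,dx$ that was carried along gives the claimed $\int_a^b \RRD(f/\p')\,g\,\p'\,dx$. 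The boundary contributions accumulated along the way are precisely $\left[\sum_{k=0}^{n-1}\left(-\frac{1}{\p'(x)}\frac{d}{dx}\right)^k \RI^{n-\a,\p}\!\left(\frac{f}{\p'}\right)g^{[n-k-1]}_\p(x)\right]_{x=a}^{x=b}$.

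The main obstacle is bookkeeping: matching the $\p'$ weights between the $\LI$/$\RI$ kernels and the successive factors of $\frac{1}{\p'}\frac{d}{dx}$, and getting the signs and the indices $n-k-1$ in the boundary sum exactly right. In particular one must check that the interchange of integration order in the first step is legitimate (the integrand is continuous on the relevant triangle since $f\in C[a,b]$, $g\in C^n[a,b]$, $\p\in C^n$, and the singularity $(\p(x)-\p(t))^{n-\a-1}$ is integrable because $n-\a-1>-1$), and that each intermediate expression $\left(-\frac{1}{\p'}\frac{d}{dx}\right)^k \RI^{n-\a,\p}(f/\p')$ is well defined and continuous so that the boundary evaluations at $a$ and $b$ make sense. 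Once the sign convention and the weight absorption are fixed at the first and a generic step, the $n$-fold iteration is routine. The right-derivative identity follows by the symmetric argument, interchanging the roles of $a$ and $b$ and tracking the extra factors of $(-1)$ coming from $\left(-\frac{1}{\p'}\frac{d}{dx}\right)^n$, which is where the $(-1)^{n-k}$ in the second boundary sum originates.
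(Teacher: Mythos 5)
Your proposal is correct and follows essentially the same route as the paper: a Dirichlet/Fubini interchange turning the inner integral into the right fractional integral of $f/\p'$, followed by $n$-fold integration by parts that absorbs the $\p'$ weights, produces the operators $\left(-\frac{1}{\p'(x)}\frac{d}{dx}\right)^k$ in the boundary sum, and leaves $\RRD\left(\frac{f}{\p'}\right)g\,\p'$ in the volume term. Your extra remarks on justifying the interchange and the sign bookkeeping are consistent with (indeed slightly more careful than) the paper's argument.
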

\begin{proof} Using Dirichlet's formula, we obtain
\begin{align*}\int_a^b f(x)  \LD g(x)\,dx &= \frac{1}{\Gamma(n-\a)}\int_a^b \int_a^x f(x)(\p(x)-\p(t))^{n-\a-1}\frac{d}{dt}g^{[n-1]}_\p (t)\,dt \,dx\\
                         &= \frac{1}{\Gamma(n-\a)}\int_a^b \int_x^b f(t)(\p(t)-\p(x))^{n-\a-1}\,dt \cdot \frac{d}{dx}g^{[n-1]}_\p (x) \,dx\\
\end{align*}
Using integration by parts, we get
\begin{align*}
&\frac{1}{\Gamma(n-\a)}\left[\int_x^b f(t)(\p(t)-\p(x))^{n-\a-1}\,dt \cdot g^{[n-1]}_\p (x)\right]_{x=a}^{x=b}\\
&\quad -\frac{1}{\Gamma(n-\a)}\int_a^b \frac{d}{dx}\left(\int_x^b f(t)(\p(t)-\p(x))^{n-\a-1}\,dt\right)g^{[n-1]}_\p (x) \,dx\\
&=\frac{1}{\Gamma(n-\a)}\left[\int_x^b f(t)(\p(t)-\p(x))^{n-\a-1}\,dt \cdot g^{[n-1]}_\p (x)\right]_{x=a}^{x=b}\\
& \quad+\frac{1}{\Gamma(n-\a)}\int_a^b \left(-\frac{1}{\p'(x)}\frac{d}{dx}\right)\left(\int_x^b f(t)(\p(t)-\p(x))^{n-\a-1}\,dt\right)\cdot \frac{d}{dx}g^{[n-2]}_\p (x) \,dx.
\end{align*}
Using again integration by parts, the last formula is equal to
\begin{align*}
&\left[\sum_{k=0}^1\left(-\frac{1}{\p'(x)}\frac{d}{dx}\right)^k\frac{1}{\Gamma(n-\a)}\int_x^b f(t)(\p(t)-\p(x))^{n-\a-1}\,dt \cdot g^{[n-k-1]}_\p (x)\right]_{x=a}^{x=b}\\
& \quad +\frac{1}{\Gamma(n-\a)}\int_a^b \left(-\frac{1}{\p'(x)}\frac{d}{dx}\right)^2\left(\int_x^b f(t)(\p(t)-\p(x))^{n-\a-1}\,dt\right)\cdot \frac{d}{dx}g^{[n-3]}_\p (x) \,dx.
\end{align*}
Repeating the process, we get
\begin{align*}
&\left[\sum_{k=0}^{n-1}\left(-\frac{1}{\p'(x)}\frac{d}{dx}\right)^k\frac{1}{\Gamma(n-\a)}\int_x^b f(t)(\p(t)-\p(x))^{n-\a-1}\,dt \cdot g^{[n-k-1]}_\p (x)\right]_{x=a}^{x=b}\\
& \quad +\frac{1}{\Gamma(n-\a)}\int_a^b \left(-\frac{1}{\p'(x)}\frac{d}{dx}\right)^n\left(\int_x^b f(t)(\p(t)-\p(x))^{n-\a-1}\,dt\right)\cdot g(x)\p'(x) \,dx\\
& =\left[\sum_{k=0}^{n-1}\left(-\frac{1}{\p'(x)}\frac{d}{dx}\right)^k {I_{b-}^{n-\a,\p}} \left(\frac{f(x)}{\p'(x)}\right) g^{[n-k-1]}_\p(x)\right]_{x=a}^{x=b}\\
& \quad +\int_a^b \RRD  \left(\frac{f(x)}{\p'(x)}\right) g(x)\p'(x)\,dx.
\end{align*}
\end{proof}

For our next result, we recall the Weierstrass formula
$$\frac{1}{\Gamma(x)}=x \exp(\gamma x)\prod_{k=1}^\infty\left[\left(1+\frac{x}{k}\right)\exp\left(-\frac{x}{k}\right)\right],$$
where $x \in \mathbb R \setminus \{0,-1,-2,\ldots\}$, and
$$\gamma=\lim\left(1+\frac12+\ldots+\frac1n-\ln n\right)\approx 0.5772$$
is the Euler's constant. Also, since
$$\prod_{k=1}^\infty\left[\left(1+\frac{\epsilon}{k}\right)\exp\left(-\frac{\epsilon}{k}\right)\right]=1+O(\epsilon^2),$$
we get
$$\frac{1}{\Gamma(\epsilon+1)}=\frac{1}{\epsilon\Gamma(\epsilon)}=\exp(\gamma\epsilon)+O(\epsilon^2).$$

\begin{theorem}\label{13} Let $f\in C^1[a,b]$ be a function and $\epsilon\in(0,1)$ a real. Then,
$${^CD_{a+}^{1-\epsilon,\p}}f(x)=f^{[1]}_\p(x)$$
$$+\epsilon\left[\gamma f^{[1]}_\p(x) +f^{[1]}_\p(a)\ln(\p(x)-\p(a))
+\int_a^x \frac{d}{dt}\left(f^{[1]}_\p(t)\right)\ln(\p(x)-\p(t))\,dt\right]+O(\epsilon^2)$$
and
$${^CD_{b-}^{1-\epsilon,\p}}f(x)=-f^{[1]}_\p(x)$$
$$+\epsilon\left[-\gamma f^{[1]}_\p(x) -f^{[1]}_\p(b)\ln(\p(b)-\p(x))
+\int_x^b \frac{d}{dt}\left(f^{[1]}_\p(t)\right)\ln(\p(t)-\p(x))\,dt\right]+O(\epsilon^2).$$
\end{theorem}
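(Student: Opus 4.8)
The plan is to start from the ``alternative formula'' of Theorem~\ref{alternativeFormula} applied with $n=1$, namely
$$\LD f(x)=\frac{(\p(x)-\p(a))^{1-\a}}{\Gamma(2-\a)}f^{[1]}_\p(a)+\frac{1}{\Gamma(2-\a)}\int_a^x(\p(x)-\p(t))^{1-\a}\frac{d}{dt}f^{[1]}_\p(t)\,dt,$$
and then substitute $\a=1-\epsilon$, so that $1-\a=\epsilon$ and $\Gamma(2-\a)=\Gamma(1+\epsilon)$. This turns the formula into
$$\Gamma(1+\epsilon)\,{^CD_{a+}^{1-\epsilon,\p}}f(x)=(\p(x)-\p(a))^{\epsilon}f^{[1]}_\p(a)+\int_a^x(\p(x)-\p(t))^{\epsilon}\frac{d}{dt}f^{[1]}_\p(t)\,dt.$$
The core of the argument is to expand each $\epsilon$-dependent factor to first order in $\epsilon$: using $s^{\epsilon}=\exp(\epsilon\ln s)=1+\epsilon\ln s+O(\epsilon^2)$ for $s=\p(x)-\p(a)$ and $s=\p(x)-\p(t)$, and using the Weierstrass-type expansion recalled just before the theorem, $1/\Gamma(1+\epsilon)=\exp(\gamma\epsilon)+O(\epsilon^2)=1+\gamma\epsilon+O(\epsilon^2)$.

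Concretely, I would write
$$(\p(x)-\p(a))^{\epsilon}f^{[1]}_\p(a)=f^{[1]}_\p(a)+\epsilon f^{[1]}_\p(a)\ln(\p(x)-\p(a))+O(\epsilon^2),$$
$$\int_a^x(\p(x)-\p(t))^{\epsilon}\frac{d}{dt}f^{[1]}_\p(t)\,dt=\int_a^x\frac{d}{dt}f^{[1]}_\p(t)\,dt+\epsilon\int_a^x\frac{d}{dt}\left(f^{[1]}_\p(t)\right)\ln(\p(x)-\p(t))\,dt+O(\epsilon^2),$$
and note that $\int_a^x\frac{d}{dt}f^{[1]}_\p(t)\,dt=f^{[1]}_\p(x)-f^{[1]}_\p(a)$. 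Adding the two right-hand sides, the terms $\pm f^{[1]}_\p(a)$ cancel, leaving the zeroth-order term $f^{[1]}_\p(x)$ plus an explicit $\epsilon$-coefficient. Multiplying through by $1/\Gamma(1+\epsilon)=1+\gamma\epsilon+O(\epsilon^2)$ contributes the extra term $\epsilon\gamma f^{[1]}_\p(x)$ (the product of $\gamma\epsilon$ with the $O(\epsilon)$ part of the bracket is absorbed into $O(\epsilon^2)$), and collecting the first-order terms yields exactly
$$\epsilon\left[\gamma f^{[1]}_\p(x)+f^{[1]}_\p(a)\ln(\p(x)-\p(a))+\int_a^x\frac{d}{dt}\left(f^{[1]}_\p(t)\right)\ln(\p(x)-\p(t))\,dt\right].$$
The right fractional derivative is handled identically using the second identity in Theorem~\ref{alternativeFormula} with $n=1$, carrying the overall sign $(-1)^1=-1$ through the expansion.

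The main technical point to be careful about — rather than a genuine obstacle — is the uniformity of the $O(\epsilon^2)$ remainders. The factor $(\p(x)-\p(t))^{\epsilon}$ has $\ln(\p(x)-\p(t))\to-\infty$ as $t\to x^-$, so one must check that the second-order remainder in $s^{\epsilon}=1+\epsilon\ln s+O(\epsilon^2\ln^2 s)$, integrated against the bounded continuous function $\frac{d}{dt}f^{[1]}_\p(t)$ over $[a,x]$, is indeed $O(\epsilon^2)$ uniformly in $x\in[a,b]$; this follows because $\int_a^x|\p(x)-\p(t)|^{\epsilon}\ln^2(\p(x)-\p(t))\,dt$ (or the analogous integrable-singularity estimate) stays bounded for small $\epsilon$, since $\ln^2 s$ is integrable near $s=0$. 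Similarly the boundary term causes no trouble because $\p(x)-\p(a)$ is bounded away from $0$ for $x$ away from $a$ and the factor $(\p(x)-\p(a))^{\epsilon}$ stays bounded near $x=a$. Once these estimates are in place, the expansion above is rigorous and the theorem follows.
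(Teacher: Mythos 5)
Your proposal follows essentially the same route as the paper's own proof: the alternative formula of Theorem~\ref{alternativeFormula} with $n=1$ is exactly the integration by parts the paper performs, and the subsequent first-order expansions of $(\p(x)-\p(\cdot))^{\epsilon}$ and of $1/\Gamma(1+\epsilon)$ via the Weierstrass formula are the same computation (the paper merely combines them into one exponential $\exp(\epsilon(\gamma+\ln(\cdot)))$ before expanding). Your closing remark on the uniform control of the $O(\epsilon^2)$ remainder near $t=x$ is a welcome extra precision that the paper's proof leaves implicit, so the proposal is correct.
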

\begin{proof} Integrating by parts and using the Taylor's formula for the exponential function, we get
\begin{align*}
{^CD_{a+}^{1-\epsilon,\p}}f(x)& = \frac{1}{\Gamma(\epsilon)}\int_a^x \frac{f'(t)}{\p'(t)}\p'(t)(\p(x)-\p(t))^{\epsilon-1}\,dt\\
           & =\frac{f'(a)}{\p'(a)\Gamma(\epsilon+1)}(\p(x)-\p(a))^{\epsilon}+ \frac{1}{\Gamma(\epsilon+1)}\int_a^x \frac{d}{dt} \left(\frac{f'(t)}{\p'(t)}\right)(\p(x)-\p(t))^{\epsilon}\,dt\\
           & =\frac{f'(a)}{\p'(a)\Gamma(\epsilon+1)}\exp(\epsilon\ln(\p(x)-\p(a)))\\
           & \quad + \frac{1}{\Gamma(\epsilon+1)}\int_a^x \frac{d}{dt} \left(\frac{f'(t)}{\p'(t)}\right)\exp(\epsilon\ln(\p(x)-\p(t)))\,dt\\
           & =\frac{f'(a)}{\p'(a)}\exp(\epsilon(\gamma+\ln(\p(x)-\p(a))))\\
           & \quad + \int_a^x \frac{d}{dt} \left(\frac{f'(t)}{\p'(t)}\right)\exp(\epsilon(\gamma+\ln(\p(x)-\p(t))))\,dt+O(\epsilon^2)\\
           & =\frac{f'(a)}{\p'(a)}(1+\epsilon(\gamma+\ln(\p(x)-\p(a))))\\
           & \quad + \int_a^x \frac{d}{dt} \left(\frac{f'(t)}{\p'(t)}\right)(1+\epsilon(\gamma+\ln(\p(x)-\p(t))))\,dt+O(\epsilon^2)\\
           &= \frac{f'(x)}{\p'(x)}+\epsilon\left[\gamma \frac{f'(x)}{\p'(x)}+\frac{f'(a)}{\p'(a)}\ln(\p(x)-\p(a))\right.\\
           & \quad  \left.
           +\int_a^x \frac{d}{dt}\left(\frac{f'(t)}{\p'(t)}\right)\ln(\p(x)-\p(t))\,dt\right]+O(\epsilon^2),
\end{align*}
obtaining the desired formula.
\end{proof}

For the Riemann--Liouville case, that is, when $\p(x)=x$, see \cite{Tarasov}. When $\a\approx 1^-$, this is known as low-level fractionality.

\begin{theorem} Given $\a\in(0,1)$ and $f\in C^1[a,b]$, we have
$$\LD (\p(x)f(x))=\p(x){^CD_{a+}^{\a,\p}}f(x)+{I_{a+}^{1-\a,\p}}f(x)-(1-\a){I_{a+}^{2-\a,\p}}{f^{[1]}_\p}(x)$$
and
$$\RD (\p(x)f(x))=\p(x){^CD_{b-}^{\a,\p}}f(x)-{I_{b-}^{1-\a,\p}}f(x)-(1-\a){I_{b-}^{2-\a,\p}}{f^{[1]}_\p}(x).$$
\end{theorem}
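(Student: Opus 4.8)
The plan is to use that, since $\a\in(0,1)$, we have $n=1$ and hence
$$\LD g(x)=\frac{1}{\Gamma(1-\a)}\int_a^x(\p(x)-\p(t))^{-\a}g'(t)\,dt$$
for any $g\in C^1[a,b]$. I would apply this with $g(x)=\p(x)f(x)$; since $f\in C^1[a,b]$ and $\p\in C^1[a,b]$, the product $\p f$ is indeed of class $C^1$, and $g'(t)=\p'(t)f(t)+\p(t)f'(t)$. This splits $\LD(\p f)(x)$ into two integrals. The first, coming from the term $\p'(t)f(t)$, is by the very definition of the fractional integral precisely ${I_{a+}^{1-\a,\p}}f(x)$.

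For the second integral, coming from $\p(t)f'(t)$, the one idea needed is the elementary identity $\p(t)=\p(x)-(\p(x)-\p(t))$. Inserting it yields
$$\frac{1}{\Gamma(1-\a)}\int_a^x(\p(x)-\p(t))^{-\a}\p(t)f'(t)\,dt=\p(x)\,\LD f(x)-\frac{1}{\Gamma(1-\a)}\int_a^x(\p(x)-\p(t))^{1-\a}f'(t)\,dt,$$
where the first term is recognized from the $n=1$ integral representation of the $\p$-Caputo derivative. In the remaining integral I would write $f'(t)=\p'(t)f^{[1]}_\p(t)$, so that it equals $\Gamma(2-\a)\,{I_{a+}^{2-\a,\p}}f^{[1]}_\p(x)$; since $\Gamma(2-\a)=(1-\a)\Gamma(1-\a)$, dividing by $\Gamma(1-\a)$ turns this term into $(1-\a)\,{I_{a+}^{2-\a,\p}}f^{[1]}_\p(x)$. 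Collecting the three contributions gives the claimed left-sided formula.

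The right-sided formula is obtained in exactly the same way, starting from $\RD g(x)=\frac{-1}{\Gamma(1-\a)}\int_x^b(\p(t)-\p(x))^{-\a}g'(t)\,dt$, again with $g=\p f$ and $g'=\p'f+\p f'$; here one uses the identity $\p(t)=\p(x)+(\p(t)-\p(x))$, and the leading minus sign in the definition of $\RD$ is what produces the minus sign in front of ${I_{b-}^{1-\a,\p}}f(x)$. There is no genuine difficulty in this argument; the only steps requiring care are keeping track of the signs in the right-sided computation and the correct use of the identity $\Gamma(2-\a)=(1-\a)\Gamma(1-\a)$.
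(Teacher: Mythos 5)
Your argument is correct and is essentially the paper's own proof: both split $(\p f)'=\p'f+\p f'$ inside the $n=1$ integral representation, use $\p(t)=\p(x)-(\p(x)-\p(t))$ (resp. $\p(t)=\p(x)+(\p(t)-\p(x))$ on the right), and identify the three resulting terms, with $\Gamma(2-\a)=(1-\a)\Gamma(1-\a)$ handling the last one. No gaps.
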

\begin{proof} We have
\begin{align*}
\LD (\p(x)f(x))&= \frac{1}{\Gamma(1-\a)}\int_a^x (\p(x)-\p(t))^{-\a}(\p(t)f(t))' (t)\,dt\\
&= \frac{1}{\Gamma(1-\a)}\int_a^x \p'(t)(\p(x)-\p(t))^{-\a}f(t)\,dt\\
 & \quad +\frac{1}{\Gamma(1-\a)}\int_a^x (\p(x)-\p(t))^{-\a}(\p(x)-(\p(x)-\p(t)))f'(t)\,dt\\
&= {I_{a+}^{1-\a,\p}}f(x)+\frac{\p(x)}{\Gamma(1-\a)}\int_a^x (\p(x)-\p(t))^{-\a}f'(t)\,dt\\
  & \quad -\frac{1}{\Gamma(1-\a)}\int_a^x (\p(x)-\p(t))^{1-\a}f'(t)\,dt\\
&=\p(x){^CD_{a+}^{\a,\p}}f(x)+{I_{a+}^{1-\a,\p}}f(x)-(1-\a){I_{a+}^{2-\a,\p}}{f^{[1]}_\p}(x).
\end{align*}
\end{proof}

For a similar problem dealing with the Riemann--Liouville fractional derivative, see \cite{Gazizov}.
The following result is a fractional version of Fermat's theorem. For the Riemann--Liouville case, we suggest  \cite{Atanackovic}.

\begin{theorem}\label{teo:fermat} Let $\a\in(0,1)$ be a real and $f\in C^1[a,b]$ a function. If $f(x^*)$ is maximum, then $\LD f(x^*) \geq0$ and $\RD f(x^*) \geq 0$.
\end{theorem}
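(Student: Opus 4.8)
The plan is to reduce the statement to a sign property of the $\psi$-Caputo derivative of a nonnegative function that vanishes at $x^*$, and then to read off that sign by a single integration by parts that transfers the derivative from $f$ onto the (integrable) kernel. First I would set $g(x):=f(x^*)-f(x)$. Since $f(x^*)$ is a maximum, $g\in C^1[a,b]$, $g\ge 0$ on $[a,b]$, and $g(x^*)=0$. The operator $\LD$ is linear and annihilates constants (immediate from \eqref{def:LD} with $n=1$, since there $f^{[n]}_\p\equiv 0$), so $\LD f(x^*)=-\LD g(x^*)$ and $\RD f(x^*)=-\RD g(x^*)$. Hence it suffices to prove $\LD g(x^*)\le 0$ and $\RD g(x^*)\le 0$.

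For the left derivative, I would start from the $\alpha\in(0,1)$ expression
$$\LD g(x^*)=\frac{1}{\Gamma(1-\alpha)}\int_a^{x^*}(\psi(x^*)-\psi(t))^{-\alpha}g'(t)\,dt,$$
which converges because $g'$ is bounded and $\alpha<1$, and note $\Gamma(1-\alpha)>0$. Integrating by parts on $[a,x^*-\delta]$ with $u(t)=(\psi(x^*)-\psi(t))^{-\alpha}$ and $v(t)=g(t)$, then letting $\delta\to 0^+$, gives
$$\int_a^{x^*}(\psi(x^*)-\psi(t))^{-\alpha}g'(t)\,dt=-(\psi(x^*)-\psi(a))^{-\alpha}g(a)-\alpha\int_a^{x^*}\psi'(t)(\psi(x^*)-\psi(t))^{-\alpha-1}g(t)\,dt.$$
The boundary term at $t=x^*-\delta$ disappears in the limit: since $g\in C^1$ and $g(x^*)=0$ we have $|g(t)|\le\|g'\|_C\,(x^*-t)$, while $(\psi(x^*)-\psi(t))^{-\alpha}=O((x^*-t)^{-\alpha})$ with $\alpha<1$, so the product is $O((x^*-t)^{1-\alpha})\to 0$; the same estimate shows the remaining improper integral converges. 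Both surviving terms are $\le 0$ because $g\ge 0$, $\psi'>0$ and $\alpha>0$. Therefore $\LD g(x^*)\le 0$, that is, $\LD f(x^*)\ge 0$.

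The right derivative is handled symmetrically: from
$$\RD g(x^*)=\frac{-1}{\Gamma(1-\alpha)}\int_{x^*}^b(\psi(t)-\psi(x^*))^{-\alpha}g'(t)\,dt$$
and the analogous integration by parts with $u(t)=(\psi(t)-\psi(x^*))^{-\alpha}$, $v(t)=g(t)$, the contribution at the endpoint $x^*$ again vanishes, the contribution at $t=b$ equals $(\psi(b)-\psi(x^*))^{-\alpha}g(b)\ge 0$, and the remaining integral $\alpha\int_{x^*}^b\psi'(t)(\psi(t)-\psi(x^*))^{-\alpha-1}g(t)\,dt$ is $\ge 0$; hence that integral is $\ge 0$, so $\RD g(x^*)\le 0$ and $\RD f(x^*)\ge 0$. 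The only delicate point in the whole argument is justifying the improper integration by parts and the vanishing of the endpoint contribution at $x^*$ under the mere hypothesis $f\in C^1$ — one cannot simply quote Theorem \ref{alternativeFormula}, which assumes $C^{n+1}$ — but the elementary bound $|g(t)|\le\|g'\|_C|x^*-t|$ settles it, and everything else is sign bookkeeping.
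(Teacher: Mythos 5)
Your proof is correct, and it reaches the same structural endpoint as the paper by a noticeably different and in some respects cleaner route. Both arguments ultimately rest on a Marchaud-type representation
$$\LD f(x^*)=\frac{(\p(x^*)-\p(a))^{-\a}}{\Gamma(1-\a)}\bigl(f(x^*)-f(a)\bigr)+\frac{\a}{\Gamma(1-\a)}\int_a^{x^*}\p'(t)\,(\p(x^*)-\p(t))^{-\a-1}\bigl(f(x^*)-f(t)\bigr)\,dt,$$
whose two terms are nonnegative by maximality. The paper obtains it by the change of variables $\t=\p^{-1}(\p(x)-\p(t)+\p(a))$, a kernel identity, and an integration by parts in which the two intermediate pieces (the bracket at $\t=a$ and the integral of $f(\p^{-1}(\cdots))(\p(\t)-\p(a))^{-\a-1}$) are individually divergent and only their combination is finite; it also opens with a superfluous monotonicity claim that is not needed (nor valid in general) — only $f(x^*)\ge f(t)$ is used. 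You instead subtract the constant $f(x^*)$, use linearity and the fact that the Caputo-type derivative kills constants, and perform a single direct integration by parts, justifying the vanishing of the boundary term at $x^*$ and the convergence of the resulting improper integral via the bound $|g(t)|\le\|g'\|_C|x^*-t|$ together with $\p'\ge m>0$ on $[a,b]$. What your approach buys is precisely the rigor at the delicate endpoint (no cancellation of formally infinite quantities) and a shorter computation; what the paper's approach buys is the explicit reflected-variable formula valid at every $x$, which it states before specializing to $x=x^*$. Your treatment of the right-sided case is the correct mirror image, so no gap remains.
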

\begin{proof} Since $f(x^*)$ is maximum, then $f$ is non-decreasing  in $[a,x^*]$ and it is non-increasing  in $[x^*,b]$. Let us prove the first inequality. Using the change of variables $\t=\p^{-1}(\p(x)-\p(t)+\p(a))$, we get
\begin{align*}
\LD f(x)&=\frac{1}{\Gamma(1-\a)}\int_a^x \frac{(\p(\t)-\p(a))^{-\a}f' (\p^{-1}(\p(x)-\p(\t)+\p(a)))}{\p'(\p^{-1}(\p(x)-\p(\t)+\p(a)))}\p'(\t)\,d\t\\
      &=\frac{-1}{\Gamma(1-\a)}\int_a^x\frac{d}{dt}\left(f(\p^{-1}(\p(x)-\p(\t)+\p(a)))\right)\\
      & \quad \cdot \left(\a\int_\t^x \p'(s)(\p(s)-\p(a))^{-\a-1}\,ds +\frac{1}{(\p(x)-\p(a))^\a}\right)d\t.
      \end{align*}
Integrating by parts, we obtain
\begin{align*}
& \frac{-1}{\Gamma(1-\a)}\left[\frac{f(a)}{(\p(x)-\p(a))^\a}-f(x)\left(\a\int_a^x \p'(s)(\p(s)-\p(a))^{-\a-1}\,ds +\frac{1}{(\p(x)-\p(a))^\a}\right)\right]\\
    & \quad + \frac{1}{\Gamma(1-\a)}\int_a^x f(\p^{-1}(\p(x)-\p(\t)+\p(a)))\left(-\frac{\a\p'(\t)}{(\p(\t)-\p(a))^{\a+1}}\right)d\t\\
       & = \frac{f(x)-f(a)}{\Gamma(1-\a)(\p(x)-\p(a))^\a}+\frac{\a}{\Gamma(1-\a)}
       \int_a^x \frac{f(x)-f(\p^{-1}(\p(x)-\p(\t)+\p(a)))}{(\p(\t)-\p(a))^{\a+1}}\p'(\t)\,d\t.
      \end{align*}
      Since $f(x^*) \geq f(a)$ and $f(x)\geq f(\p^{-1}(\p(x)-\p(\t)+\p(a)))$, for all $\t\in [a,x^*]$, it follows that $\LD f(x^*) \geq0$. The proof of the second inequality is similar, if one uses the change of variables $\t=\p^{-1}(\p(x)-\p(t)+\p(b))$.
\end{proof}
We remark that, in Theorem \ref{teo:fermat}, we can not conclude that the fractional derivative vanishes at the extremum point. Take, for example, $f(x)=2x-x^2$ and the kernel $\p(x)=x$, with $x\in[0,2]$. Then, $f$ satisfies the hypothesis of the Theorem with $x^*=1$, and e.g. for $\a=0.5$, we have
$${^CD_{0+}^{0.5,\p}} f(1)=\frac{2}{\Gamma(1.5)}-\frac{2}{\Gamma(2.5)}\approx 0.75>0.$$
Also, since $f(x)=2(2-x)-(2-x)^2$, then
$${^CD_{2-}^{0.5,\p}} f(1)=\frac{2}{\Gamma(1.5)}-\frac{2}{\Gamma(2.5)}\approx0.75>0.$$
It is clear, from the definition of the $\p$-Caputo fractional derivative, that if $f$ is non-decreasing (resp. non-increasing) in $[a,x^*]$, then $\LD f(x) \geq 0$ (resp $\leq0$) since $f'(x) \geq 0$ (resp $\leq 0$), for all $x\in [a,x^*]$.

The next result is a fractional mean value theorem for the $\p$-Caputo fractional derivative.

\begin{theorem}\label{16} Let $f\in C^1[a,b]$ and $\a\in(0,1)$. Then, for all $x \in (a,b)$, there exists some $c\in(a,x)$ and some $d\in(x,b)$ such that
\begin{align*}
f(x)&= f(a)+\LD f(c)\frac{(\p(x)-\p(a))^\a}{\Gamma(\a+1)}\\
&= f(b)+\RD f(d)\frac{(\p(b)-\p(x))^\a}{\Gamma(\a+1)}
\end{align*}
\end{theorem}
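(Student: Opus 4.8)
The plan is to reduce the statement to the weighted mean value theorem for integrals by means of the inversion formula of Theorem~\ref{inverseoperation1}. Since $\a\in(0,1)$ we have $n=1$, so Theorem~\ref{inverseoperation1} gives $\LI\LD f(x)=f(x)-f(a)$; written out, this reads
$$f(x)-f(a)=\frac{1}{\Gamma(\a)}\int_a^x\p'(t)(\p(x)-\p(t))^{\a-1}\,\LD f(t)\,dt.$$
First I would record that $\LD f\in C[a,b]$: for $f\in C^1[a,b]$ one has $\LD f(x)=\frac{1}{\Gamma(1-\a)}\int_a^x(\p(x)-\p(t))^{-\a}f'(t)\,dt$, a weakly singular integral of a continuous function, hence continuous on $[a,b]$ (this is exactly what underlies the boundedness theorem and the fact that $\LD f(a)=0$). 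Thus the integrand above is the product of the continuous function $\LD f$ with the weight $w(t):=\p'(t)(\p(x)-\p(t))^{\a-1}$, which is strictly positive on $(a,x)$ because $\p$ is increasing and $\p(x)>\p(t)$ there, and is integrable on $(a,x)$.

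Next I would apply the weighted mean value theorem for integrals to obtain a point $c$ with
$$\int_a^x\p'(t)(\p(x)-\p(t))^{\a-1}\,\LD f(t)\,dt=\LD f(c)\int_a^x\p'(t)(\p(x)-\p(t))^{\a-1}\,dt,$$
and I would argue that $c$ may be taken in the open interval $(a,x)$: if the $w$-average $\mu$ of $\LD f$ over $[a,x]$ were attained by $\LD f$ only at an endpoint, then $\LD f-\mu$ would keep a fixed sign on $(a,x)$ while integrating against $w>0$ to zero, which is impossible; hence $\LD f(c)=\mu$ for some $c\in(a,x)$. Then an elementary substitution $u=\p(x)-\p(t)$ gives $\int_a^x\p'(t)(\p(x)-\p(t))^{\a-1}\,dt=(\p(x)-\p(a))^{\a}/\a$, and combining everything with $\a\Gamma(\a)=\Gamma(\a+1)$ yields
$$f(x)-f(a)=\frac{1}{\Gamma(\a)}\,\LD f(c)\,\frac{(\p(x)-\p(a))^{\a}}{\a}=\LD f(c)\,\frac{(\p(x)-\p(a))^{\a}}{\Gamma(\a+1)},$$
which is the first asserted equality. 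The second equality follows in the same way from the right-sided inversion formula $\RI\RD f(x)=f(x)-f(b)$, using the weight $\p'(t)(\p(t)-\p(x))^{\a-1}$ on $(x,b)$ and the substitution $u=\p(t)-\p(x)$, and produces the point $d\in(x,b)$.

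The routine part is the application of Theorem~\ref{inverseoperation1} and the integral evaluation; the only delicate point is the bookkeeping around the mean value theorem, namely confirming that $\LD f$ is continuous up to the endpoints so that the theorem is legitimately applied on the closed interval $[a,x]$, and then upgrading the intermediate point to the open interval $(a,x)$ by exploiting the strict positivity of the kernel. I expect this to be the main obstacle, though it is entirely standard.
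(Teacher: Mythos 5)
Your proposal is correct and follows essentially the same route as the paper: apply the weighted mean value theorem for integrals to $\LI\LD f(x)$, evaluate $\int_a^x\p'(t)(\p(x)-\p(t))^{\a-1}\,dt=(\p(x)-\p(a))^\a/\a$, and identify $\LI\LD f(x)$ with $f(x)-f(a)$ via Theorem~\ref{inverseoperation1} (and symmetrically for the right-sided case). The only difference is that you spell out the continuity of $\LD f$ on $[a,b]$ and the placement of $c$ in the open interval, details the paper leaves implicit.
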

\begin{proof} By the mean value theorem for integrals, there exists some $c\in(a,x)$ such that
\begin{align*}
\LI \LD f(x)&= \frac{1}{\Gamma(\a)}\int_a^x \p'(t)(\p(x)-\p(t))^{\a-1}\LD f(t)\,dt \\
&=\LD f(c) \frac{1}{\Gamma(\a)}\int_a^x \p'(t)(\p(x)-\p(t))^{\a-1}\,dt \\
&=\LD f(c)\frac{(\p(x)-\p(a))^\a}{\Gamma(\a+1)}.
\end{align*}
On the other hand, by Theorem \ref{inverseoperation1}, we have
$$\LI \LD f(x)=f(x)-f(a),$$
and thus
$$f(x)-f(a)=\LD f(c)\frac{(\p(x)-\p(a))^\a}{\Gamma(\a+1)}.$$
\end{proof}

\begin{theorem}\label{aux3} Given $\a\in(0,1)$, $k\in\mathbb N$ and $f$ is such that all the fractional derivatives ${^CD_{a+}^{k\a,\p}}f$, ${^CD_{a+}^{(k+1)\a,\p}}f$, ${^CD_{b-}^{k\a,\p}}f$ and ${^CD_{b-}^{(k+1)\a,\p}}f$ exist and are continuous on $[a,b]$. Then, for all $x\in[a,b]$, we have
$${I_{a+}^{k\a,\p}} {^CD_{a+}^{k\a,\p}}f(x)- {I_{a+}^{(k+1)\a,\p}}{^CD_{a+}^{(k+1)\a,\p}}f(x)
=\frac{(\p(x)-\p(a))^{k\a}}{\Gamma(k\a+1)}{^CD_{a+}^{k\a,\p}}f(a),$$
and
$${I_{b-}^{k\a,\p}} {^CD_{b-}^{k\a,\p}}f(x)- {I_{b-}^{(k+1)\a,\p}}{^CD_{b-}^{(k+1)\a,\p}}f(x)
=\frac{(\p(b)-\p(x))^{k\a}}{\Gamma(k\a+1)}{^CD_{b-}^{k\a,\p}}f(b),$$
where ${^CD_{a+}^{k\a,\p}}=\LD \LD \ldots \LD$ ($k$ -times).
\end{theorem}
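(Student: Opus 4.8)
The plan is to reduce the identity to the already-known composition $\LI\LD$ (Theorem~\ref{inverseoperation1}) by means of the semigroup law for fractional integrals, after which only an elementary power-function integral remains.

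First I would abbreviate $g:={^CD_{a+}^{k\a,\p}}f=(\LD)^kf$, so that ${^CD_{a+}^{(k+1)\a,\p}}f=\LD g$; both are continuous on $[a,b]$ by hypothesis. Using the commutative semigroup property ${I_{a+}^{(k+1)\a,\p}}={I_{a+}^{k\a,\p}}\circ\LI$ for the $\p$-fractional integral, I would write
$${I_{a+}^{(k+1)\a,\p}}{^CD_{a+}^{(k+1)\a,\p}}f(x)={I_{a+}^{k\a,\p}}\left(\LI\LD g\right)(x).$$
Since $\a\in(0,1)$, Theorem~\ref{inverseoperation1} (the case $n=1$) gives $\LI\LD g(x)=g(x)-g(a)$, so
$${I_{a+}^{k\a,\p}}{^CD_{a+}^{k\a,\p}}f(x)-{I_{a+}^{(k+1)\a,\p}}{^CD_{a+}^{(k+1)\a,\p}}f(x)={I_{a+}^{k\a,\p}}\big(g-(g-g(a))\big)(x)=g(a)\cdot{I_{a+}^{k\a,\p}}[1](x),$$
where $[1]$ denotes the constant function equal to $1$.

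It then remains to evaluate ${I_{a+}^{k\a,\p}}[1](x)=\frac{1}{\Gamma(k\a)}\int_a^x\p'(t)(\p(x)-\p(t))^{k\a-1}\,dt$; the substitution $u=(\p(t)-\p(a))/(\p(x)-\p(a))$, exactly as in the proof of Lemma~\ref{lemma:power}, turns this into $(\p(x)-\p(a))^{k\a}/\Gamma(k\a)$ times $\int_0^1(1-u)^{k\a-1}\,du=B(1,k\a)=1/(k\a)$, i.e. $(\p(x)-\p(a))^{k\a}/\Gamma(k\a+1)$. Recalling $g(a)={^CD_{a+}^{k\a,\p}}f(a)$ yields the first formula. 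The second follows by the identical computation at the right endpoint: set $h:=(\RD)^kf$, use ${I_{b-}^{(k+1)\a,\p}}={I_{b-}^{k\a,\p}}\circ\RI$ together with $\RI\RD h(x)=h(x)-h(b)$ from Theorem~\ref{inverseoperation1}, and compute ${I_{b-}^{k\a,\p}}[1](x)=(\p(b)-\p(x))^{k\a}/\Gamma(k\a+1)$.

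Every step is mechanical; the only place deserving care is verifying that the standing hypothesis — existence and continuity of ${^CD_{a+}^{k\a,\p}}f$ and ${^CD_{a+}^{(k+1)\a,\p}}f$ — really supplies the regularity of $g=(\LD)^kf$ needed to invoke Theorem~\ref{inverseoperation1} at order $\a\in(0,1)$. Once that is granted, the rest is bookkeeping with the semigroup law and the computation of the fractional integral of a constant.
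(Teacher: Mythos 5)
Your proposal is correct and follows essentially the same route as the paper's own proof: the semigroup law for $\p$-fractional integrals, the identity $\LI\LD g(x)=g(x)-g(a)$ from Theorem~\ref{inverseoperation1} applied to $g={^CD_{a+}^{k\a,\p}}f$, and then the fractional integral of the constant $g(a)$. The only difference is cosmetic — you spell out the evaluation of ${I_{a+}^{k\a,\p}}[1](x)$ via the Beta-function substitution and flag the regularity of $g$ needed to invoke Theorem~\ref{inverseoperation1}, a point the paper passes over silently.
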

\begin{proof} Using the semigroup law for integrals and Theorem \ref{inverseoperation1}, we get
$${I_{a+}^{k\a,\p}} {^CD_{a+}^{k\a,\p}}f(x)- {I_{a+}^{(k+1)\a,\p}}{^CD_{a+}^{(k+1)\a,\p}}f(x)
={I_{a+}^{k\a,\p}}\left(  {^CD_{a+}^{k\a,\p}}f(x)- {I_{a+}^{\a,\p}}\LD{^CD_{a+}^{k\a,\p}}f(x)\right)$$
$$={I_{a+}^{k\a,\p}}\left(  {^CD_{a+}^{k\a,\p}}f(x)- {^CD_{a+}^{k\a,\p}}f(x)+{^CD_{a+}^{k\a,\p}}f(a)\right)= \frac{(\p(x)-\p(a))^{k\a}}{\Gamma(k\a+1)}{^CD_{a+}^{k\a,\p}}f(a).$$
\end{proof}

For a similar result involving the Caputo fractional derivative, that is, when $\p(x)=x$, we mention \cite{Odibat}. As a consequence of the previous result, we deduce a fractional Taylor's formula.

\begin{theorem}\label{18} Given $\a\in(0,1)$, $n\in\mathbb N$ and $f$ is such that ${^CD_{a+}^{k\a,\p}}f$ and ${^CD_{b-}^{k\a,\p}}f$ exist and are continuous, for all $k=0,1,\ldots,n+1$. Then, given $x\in[a,b]$, we have the following:
\begin{align*}
f(x)&= \sum_{k=0}^n \frac{(\p(x)-\p(a))^{k\a}}{\Gamma(k\a+1)}{^CD_{a+}^{k\a,\p}}f(a)
+\frac{{^CD_{a+}^{(n+1)\a,\p}}f(c)}{\Gamma((n+1)\a+1)}(\p(x)-\p(a))^{(n+1)\a}\\
&= \sum_{k=0}^n \frac{(\p(b)-\p(x))^{k\a}}{\Gamma(k\a+1)}{^CD_{b-}^{k\a,\p}}f(b)
+\frac{{^CD_{b-}^{(n+1)\a,\p}}f(d)}{\Gamma((n+1)\a+1)}(\p(b)-\p(x))^{(n+1)\a},\\
\end{align*}
for some $c\in(a,x)$ and some $d\in(x,b)$.
\end{theorem}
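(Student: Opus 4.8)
The plan is to prove the left-hand formula by a telescoping-sum argument built directly on Theorem~\ref{aux3}, and then to obtain the remainder term via the mean value theorem for integrals. First I would write down the identity of Theorem~\ref{aux3} for each index $k=0,1,\ldots,n$, namely
\[
{I_{a+}^{k\a,\p}} {^CD_{a+}^{k\a,\p}}f(x)- {I_{a+}^{(k+1)\a,\p}}{^CD_{a+}^{(k+1)\a,\p}}f(x)
=\frac{(\p(x)-\p(a))^{k\a}}{\Gamma(k\a+1)}{^CD_{a+}^{k\a,\p}}f(a),
\]
using the convention ${^CD_{a+}^{0,\p}}f=f$ and ${I_{a+}^{0,\p}}$ the identity operator (so the $k=0$ term reads $f(x)-{I_{a+}^{\a,\p}}{^CD_{a+}^{\a,\p}}f(x)=f(a)$, consistent with Theorem~\ref{inverseoperation1} for $\a\in(0,1)$). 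Summing these $n+1$ equations, the left side telescopes: all intermediate terms ${I_{a+}^{k\a,\p}}{^CD_{a+}^{k\a,\p}}f(x)$ for $1\le k\le n$ cancel, leaving
\[
f(x)-{I_{a+}^{(n+1)\a,\p}}{^CD_{a+}^{(n+1)\a,\p}}f(x)
=\sum_{k=0}^{n}\frac{(\p(x)-\p(a))^{k\a}}{\Gamma(k\a+1)}{^CD_{a+}^{k\a,\p}}f(a).
\]

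Next I would rearrange this to express $f(x)$ as the finite sum plus ${I_{a+}^{(n+1)\a,\p}}{^CD_{a+}^{(n+1)\a,\p}}f(x)$, and then evaluate that last integral term. Writing it out,
\[
{I_{a+}^{(n+1)\a,\p}}{^CD_{a+}^{(n+1)\a,\p}}f(x)
=\frac{1}{\Gamma((n+1)\a)}\int_a^x \p'(t)(\p(x)-\p(t))^{(n+1)\a-1}{^CD_{a+}^{(n+1)\a,\p}}f(t)\,dt.
\]
Since ${^CD_{a+}^{(n+1)\a,\p}}f$ is assumed continuous on $[a,b]$ and the weight $\p'(t)(\p(x)-\p(t))^{(n+1)\a-1}$ is nonnegative and integrable on $[a,x]$ (as $\p$ is increasing with $\p'>0$ and $(n+1)\a>0$), the mean value theorem for integrals gives a point $c\in(a,x)$ with
\[
{I_{a+}^{(n+1)\a,\p}}{^CD_{a+}^{(n+1)\a,\p}}f(x)
={^CD_{a+}^{(n+1)\a,\p}}f(c)\cdot\frac{1}{\Gamma((n+1)\a)}\int_a^x \p'(t)(\p(x)-\p(t))^{(n+1)\a-1}\,dt
=\frac{{^CD_{a+}^{(n+1)\a,\p}}f(c)}{\Gamma((n+1)\a+1)}(\p(x)-\p(a))^{(n+1)\a},
\]
which is exactly the remainder term claimed. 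Combining the two displays yields the first formula; the second (right-sided) formula follows by the symmetric argument, using the right-sided half of Theorem~\ref{aux3} and the change of orientation of the integral, as noted in the paper's convention of proving only the left case.

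The routine parts are the telescoping and the final Beta-type integral evaluation $\int_a^x \p'(t)(\p(x)-\p(t))^{(n+1)\a-1}\,dt=(\p(x)-\p(a))^{(n+1)\a}/((n+1)\a)$, which is a direct substitution $u=\p(t)$. The one point that needs a little care — and is the main thing to get right — is the bookkeeping of the $k=0$ endpoint of the sum: one must check that Theorem~\ref{aux3} applied at $k=0$ is consistent with Theorem~\ref{inverseoperation1} (rather than being a genuinely new input), so that the telescoping starts cleanly from $f(x)$; this is where the hypothesis that ${^CD_{a+}^{k\a,\p}}f$ is continuous for $k=0,\ldots,n+1$ is used in full, guaranteeing that every intermediate quantity in the telescope is well-defined and that the mean value theorem for integrals applies to the last term. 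No genuine obstacle is expected beyond this indexing check.
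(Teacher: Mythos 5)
Your proposal is correct and follows essentially the same route as the paper: summing the identity of Theorem~\ref{aux3} over $k=0,\ldots,n$, telescoping to isolate $f(x)$ minus the term ${I_{a+}^{(n+1)\a,\p}}{^CD_{a+}^{(n+1)\a,\p}}f(x)$, and then applying the mean value theorem for integrals to produce the remainder. Your extra care with the $k=0$ convention (consistency with Theorem~\ref{inverseoperation1}) is a point the paper leaves implicit, but it does not change the argument.
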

\begin{proof} By Theorem \ref{aux3}, we have
$$\sum_{k=0}^n\left({I_{a+}^{k\a,\p}} {^CD_{a+}^{k\a,\p}}f(x)- {I_{a+}^{(k+1)\a,\p}}{^CD_{a+}^{(k+1)\a,\p}}f(x)\right)
=\sum_{k=0}^n\frac{(\p(x)-\p(a))^{k\a}}{\Gamma(k\a+1)}{^CD_{a+}^{k\a,\p}}f(a).$$
So, we conclude that
$$f(x)=\sum_{k=0}^n\frac{(\p(x)-\p(a))^{k\a}}{\Gamma(k\a+1)}{^CD_{a+}^{k\a,\p}}f(a)+{I_{a+}^{(n+1)\a,\p}}{^CD_{a+}^{(n+1)\a,\p}}f(x).$$
Using the mean value theorem for integrals, we have
\begin{align*}
{I_{a+}^{(n+1)\a,\p}}{^CD_{a+}^{(n+1)\a,\p}}f(x) & = \frac{1}{\Gamma((n+1)\a)}\int_a^x \p'(t)(\p(x)-\p(t))^{(n+1)\a-1}{^CD_{a+}^{(n+1)\a,\p}}f(t)\,dt\\
& = \frac{{^CD_{a+}^{(n+1)\a,\p}}f(c)}{\Gamma((n+1)\a+1)}(\p(x)-\p(a))^{(n+1)\a},
\end{align*}
for some $c\in(a,x)$, ending the proof.
\end{proof}

\section{A numerical tool}
\label{sec:numerical}

One issue that usually arises when dealing with operators of fractional type is that it is extremely difficult to analytically solve the problems.
Although we already have several results on the existence and uniqueness theorems for fractional differential equations, necessary conditions for solutions of optimal control problems, when we intend to determine the solution for the problem, numerical methods are  usually required to find an approximation for it. In this section, we present a method, easy to use, that allows us to rewrite the fractional problem as an ordinary one, without the dependence on fractional operators. The idea is to approximate the fractional derivative by a sum, which depends only on the derivatives of integer order. This was already done for the Riemann--Liouville and Caputo fractional operators \cite{Almeida1,Atanackovic1}, and for the Hadamard fractional operators \cite{Almeida3}. We also refer to \cite{Tavares}, where problems dealing with the Caputo fractional derivative of variable order are solved using such expansions. In this paper, we generalize some of the previous works by considering a Caputo type fractional derivative with respect to another function of constant fractional order.

\begin{theorem}\label{teo:approx} Let $f\in C^{n+1}[a,b]$ be a function, $\a>0$ arbitrary, and $N$ a positive integer. Then,
$$\LD f(x)=A_N(\p(x)-\p(a))^{n-\a}\f (x)-\sum_{k=1}^NB_k (\p(x)-\p(a))^{n-\a-k}V_k(t)+\overline{E}_N(t)$$
and
$$\RD f(x)=(-1)^n A_N(\p(b)-\p(x))^{n-\a}\f (x)-\sum_{k=1}^NB_k (\p(b)-\p(x))^{n-\a-k}W_k(t)+\overline{\overline{E}}_N(t),$$
where
\begin{align*}
A_N:=&\frac{1}{\Gamma(n-\a+1)} \left[1+\sum_{k=1}^N(-1)^k \binom{n-\a}{k}\right],\\
B_k:=&\frac{(-1)^k}{\Gamma(n-\a+1)} \binom{n-\a}{k},\\
V_k(t):=& \int_a^x k \p'(t)(\p(t)-\p(a))^{k-1}\f(t)\,dt,\\
W_k(t):=& \int_x^b k (-1)^n \p'(t)(\p(b)-\p(t))^{k-1}\f(t)\,dt,\\
\end{align*}
and
$$\lim_{N\to\infty }\overline{E}_N(t)=\lim_{N\to\infty }\overline{\overline{E}}_N(t)=0, \quad \forall t\in[a,b].$$
\end{theorem}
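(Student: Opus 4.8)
The plan is to start from the representation in Theorem \ref{alternativeFormula}, which is available exactly because $f,\p\in C^{n+1}$, and to expand its kernel in a Newton binomial series. Since $\a\notin\mathbb N$ we have $n-\a\in(0,1)$; for $t\in[a,x]$ write
$$(\p(x)-\p(t))^{n-\a}=(\p(x)-\p(a))^{n-\a}\left(1-\frac{\p(t)-\p(a)}{\p(x)-\p(a)}\right)^{n-\a}=(\p(x)-\p(a))^{n-\a}\sum_{k=0}^{\infty}(-1)^k\binom{n-\a}{k}\frac{(\p(t)-\p(a))^k}{(\p(x)-\p(a))^k},$$
and substitute this into the integral term of Theorem \ref{alternativeFormula}. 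The reason for using the exponent $n-\a$, rather than the exponent $n-\a-1$ coming from the raw definition \eqref{def:LD}, is that $\sum_{k\ge0}\bigl|\binom{n-\a}{k}\bigr|<\infty$ (the terms decay like $k^{-(n-\a+1)}$), whereas the analogous series with $\binom{n-\a-1}{k}$ diverges when $\a>n-1$. Combined with the bound $0\le\p(t)-\p(a)\le\p(x)-\p(a)$ on $[a,x]$, this gives uniform convergence of the series in $t$ by the Weierstrass $M$-test, so it may be integrated term by term against the continuous function $\frac{d}{dt}\f(t)$ (continuous because $f,\p\in C^{n+1}$).

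Next I would integrate by parts each resulting term $\int_a^x(\p(t)-\p(a))^k\frac{d}{dt}\f(t)\,dt$. For $k\ge1$ the boundary contribution at $a$ vanishes and what remains is $(\p(x)-\p(a))^k\f(x)-V_k$, with $V_k$ the integral in the statement; the $k=0$ term gives $\f(x)-\f(a)$, whose stray $-\f(a)$ cancels the first summand of Theorem \ref{alternativeFormula}. Collecting the coefficient of $\f(x)$ into $\frac{1}{\Gamma(n-\a+1)}\bigl[1+\sum_{k\ge1}(-1)^k\binom{n-\a}{k}\bigr]$ and the rest into $-\sum_{k\ge1}B_k(\p(x)-\p(a))^{n-\a-k}V_k$ yields an exact infinite-series identity for $\LD f(x)$. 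Truncating both sums at $N$ produces precisely $A_N$ and the stated finite sum, and collects the two remaining tails into
$$\overline{E}_N=\Bigl(\sum_{k=N+1}^{\infty}B_k\Bigr)(\p(x)-\p(a))^{n-\a}\f(x)-\sum_{k=N+1}^{\infty}B_k(\p(x)-\p(a))^{n-\a-k}V_k .$$

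To finish I would show $\overline{E}_N\to0$ using the elementary estimate $|V_k|\le\|\f\|_C\int_a^x k\p'(t)(\p(t)-\p(a))^{k-1}\,dt=\|\f\|_C(\p(x)-\p(a))^k$, so that each term of the second tail is bounded by $\|\f\|_C(\p(x)-\p(a))^{n-\a}|B_k|$; since $\sum|B_k|<\infty$, both tails tend to $0$, in fact uniformly on $[a,b]$. The right-derivative formula is obtained by the same argument with $\p(x)-\p(a)$ replaced by $\p(b)-\p(x)$ and $\f$ by $(-1)^n\f$, integrating over $[x,b]$. I expect the one genuinely delicate point to be the justification of the term-by-term integration — that is, the absolute convergence of the binomial series up to the endpoint $z=1$, which is exactly what forces the $C^{n+1}$ hypothesis; everything else is bookkeeping together with a repeated integration by parts.
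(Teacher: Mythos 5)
Your proposal is correct and follows essentially the same route as the paper: start from the integrated-by-parts representation of Theorem \ref{alternativeFormula}, expand $\left(1-\frac{\p(t)-\p(a)}{\p(x)-\p(a)}\right)^{n-\a}$ by Newton's generalised binomial theorem, integrate by parts term by term to produce $A_N$, $B_k$, $V_k$, and control the tail. The only (harmless) differences are bookkeeping — you truncate after passing to the full series, whereas the paper truncates first and keeps the tail of integrals of $\frac{d}{dt}\f$ as $\overline{E}_N$, which is the same quantity — and your error estimate via $\|\f\|_C$ and $\sum_{k>N}|B_k|$ replaces the paper's bound via $\max\left|\frac{d}{dt}\f\right|$ and an explicit decay rate $O(N^{-(n-\a)})$ for the binomial tail.
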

\begin{proof} Starting with the definition of $\p$-Caputo fractional derivative, and using Theorem \ref{alternativeFormula}, we get
\begin{align*}
\LD f(x)& =\frac{(\p(x)-\p(a))^{n-\a}}{\Gamma(n-\a+1)}\f(a)+\frac{1}{\Gamma(n-\a+1)}\int_a^x(\p(x)-\p(t))^{n-\a}\frac{d}{dt}\f(t)\,dt\\
& =\frac{(\p(x)-\p(a))^{n-\a}}{\Gamma(n-\a+1)}\f(a)+\frac{(\p(x)-\p(a))^{n-\a}}{\Gamma(n-\a+1)}
                 \int_a^x\left(1-\frac{\p(t)-\p(a)}{\p(x)-\p(a)}\right)^{n-\a}\frac{d}{dt}\f(t)\,dt.\\
\end{align*}
Applying now Newton's generalised binomial theorem, we can expand the binomial inside the integral, obtaining
\begin{align*}
\LD f(x)&=\frac{(\p(x)-\p(a))^{n-\a}}{\Gamma(n-\a+1)}\f(a)+\frac{(\p(x)-\p(a))^{n-\a}}{\Gamma(n-\a+1)}\\
    & \quad \times \sum_{k=0}^N\frac{(-1)^k}{(\p(x)-\p(a))^k}\binom{n-\a}{k}
        \int_a^x(\p(t)-\p(a))^k \frac{d}{dt}\f(t)\,dt+\overline{E}_N(t),\\
\end{align*}
where
$$\overline{E}_N(t):= \frac{(\p(x)-\p(a))^{n-\a}}{\Gamma(n-\a+1)}\sum_{k=N+1}^\infty\frac{(-1)^k}{(\p(x)-\p(a))^k}\binom{n-\a}{k}
        \int_a^x(\p(t)-\p(a))^k \frac{d}{dt}\f(t)\,dt.$$
        Splitting the sum into the first term $k=0$ and the remaining terms $k=1,\ldots,N$, and then using integration by parts, we get
\begin{align*}
\LD f(x)&=\frac{(\p(x)-\p(a))^{n-\a}}{\Gamma(n-\a+1)}\f(a)+\frac{(\p(x)-\p(a))^{n-\a}}{\Gamma(n-\a+1)}(\f(x)-\f(a))\\
   & \quad +\frac{(\p(x)-\p(a))^{n-\a}}{\Gamma(n-\a+1)}\sum_{k=1}^N\frac{(-1)^k}{(\p(x)-\p(a))^k}\binom{n-\a}{k}\\
  & \quad \times \left[(\p(x)-\p(a))^k \f(x)-  \int_a^x k \p'(t)(\p(t)-\p(a))^{k-1}\f(t)\,dt\right]+\overline{E}_N(t),\\
      & =A_N(\p(x)-\p(a))^{n-\a}\f (x)-\sum_{k=1}^NB_k (\p(x)-\p(a))^{n-\a-k}V_k(t)+\overline{E}_N(t)
\end{align*}
We now seek an upper bound for the error formula $\overline{E}_N(t)$. Let
$$M:= \max_{t\in[a,b]}\left| \frac{d}{dt}\f(t) \right|.$$
Once
$$\sum_{k=N+1}^\infty\left|\binom{n-\a}{k}\right| \leq \sum_{k=N+1}^\infty \frac{\exp((n-\a)^2+n-\a)}{k^{n-\a+1}}$$
$$\leq \int_N^\infty  \frac{\exp((n-\a)^2+n-\a)}{k^{n-\a+1}} dk= \frac{\exp((n-\a)^2+n-\a)}{(n-\a)k^{n-\a}}$$
and
$$\frac{\p(t)-\p(a)}{\p(x)-\p(a)}\leq 1, \forall t \in [a,x],$$
since $\p$ is an increasing function, it  follows that
$$|\overline{E}_N(t)|\leq M \frac{(\p(x)-\p(a))^{n-\a}(x-a) \exp((n-\a)^2+n-\a)}{(n-\a)\Gamma(n-\a+1)N^{n-\a}},$$
and so
$$\lim_{N\to\infty }\overline{E}_N(t)=0.$$
\end{proof}

Theorem \ref{teo:approx} provides a numerical method to deal with fractional type problems. It consists in approximating the $\p$-Caputo fractional derivatives of $f$, $\LD f(x)$ and $\RD f(x)$, by a sum of functions that involves the first-order derivative of $f$ only:
$$\LD f(x)\approx A_N(\p(x)-\p(a))^{n-\a}\f (x)-\sum_{k=1}^NB_k (\p(x)-\p(a))^{n-\a-k}V_k(t),$$
and
$$\RD f(x)\approx (-1)^n A_N(\p(b)-\p(x))^{n-\a}\f (x)-\sum_{k=1}^NB_k (\p(b)-\p(x))^{n-\a-k}W_k(t),$$
and the error decreases as $N\to\infty$.
Using this, we can rewrite the fractional problem as an ordinary one, and then apply any analytical or numerical method available to solve the problem.
For example, consider the kernel of Caputo--Hadamard type $\p(x)=\ln(x+1)$, with $x\in[0,5]$. If we consider $\a\in(0,1)$, then given a function of class $C^2$, we have
\begin{equation}\label{EqApprox}{^CD_{0+}^{\a,\p}}f(x)\approx A_N(x+1)(\ln(x+1))^{1-\a}f'(x)-\sum_{k=1}^NB_k (\ln(x+1))^{1-\a-k}V_k(t),\end{equation}
where
\begin{align*}
A_N:=&\frac{1}{\Gamma(2-\a)} \left[1+\sum_{k=1}^N(-1)^k \binom{1-\a}{k}\right],\\
B_k:=&\frac{(-1)^k}{\Gamma(1-\a)} \binom{1-\a}{k},\\
V_k(t):=& \int_0^x k (\ln(t+1))^{k-1}f'(t)\,dt.
\end{align*}
In Figure \ref{figApproxim}, we compare the exact expression of the fractional derivative of $f(x)=\ln^2(x+1)$, for $x\in[0,5]$, with several numerical approximations of it, as given by Eq. \eqref{EqApprox}, for different values of $N$, with fractional order $\a=0.5$.

\begin{figure}[!htbp]
\centering
\includegraphics[width=10cm]{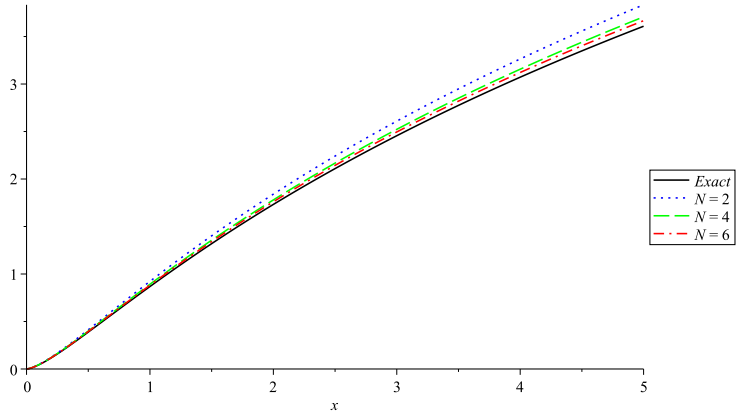}
\caption{Exact vs. numerical approximations of ${^CD_{0+}^{\a,\p}}f$, with $f(x)=\ln^2(x + 1)$.}
\label{figApproxim}
\end{figure}

We now exemplify how the method can be useful to solve fractional problems. Consider a nonlinear fractional differential equation of order $\a>0$:
$$\LD f(x)=g(x,f(x)),$$
on a finite interval $[a,b]$, with the initial conditions
$$f^{(k)}(a)=f_k, \quad k=0,\ldots,n-1, \quad \mbox{where } n=[\a]+1.$$
Using the approximation given by Theorem \ref{teo:approx}, we replace $\LD f(x)$ by such formula, and with this we get a system of ODE's:
$$\left\{\begin{array}{l}
A_N(\p(x)-\p(a))^{n-\a}\f (x)-\sum_{k=1}^NB_k (\p(x)-\p(a))^{n-\a-k}V_k(t)= g(x,f(x))\\
V_k'(t)=  k \p'(x)(\p(x)-\p(a))^{k-1}\f(x), \quad k=1,\ldots,N,\\
\end{array}\right.$$
with $n+N$  initial conditions
$$f^{(k)}(a)=f_k, \quad k=0,\ldots,n-1, \quad \mbox{and} \quad V_k(a)=0, \quad k=1,\ldots,N.$$

For example, fix $\a=0.5$ and $\p(x)=\ln(x+1)$, with $x\in[0,5]$, and consider the Cauchy problem

\begin{equation}\label{FDEproblem}\left\{\begin{array}{l}
{^CD_{0+}^{0.5,\p}}f(x)+f(x)=\frac{2}{\Gamma(2.5)}\ln^{1.5}(x+1) +\ln^2(x+1), \quad x\in[0,5]\\
f(0)=0.
\end{array}\right.\end{equation}
The solution of this problem is the function $f(x)=\ln^2(x+1)$. Applying the decomposition formula \eqref{EqApprox}, and fixing an integer $N$, we rewrite this problem as an ordinary system of ODE's:
\begin{equation}\label{cauchy}\left\{\begin{array}{l}
A_N(x+1)\ln^{0.5}(x+1)f'(x)-\sum_{k=1}^NB_k \ln^{0.5-k}(x+1)V_k(t)+f(x)\\
 \quad = \frac{2}{\Gamma(2.5)}\ln^{1.5}(x+1) +\ln^2(x+1)\\
V_k'(t)=k \ln^{k-1}(x+1)f'(x), \quad k=1,\ldots,N\\
f(0)=0\\
V_k(0)=0, \quad k=1,\ldots,N,
\end{array}\right.\end{equation}
where
\begin{align*}
A_N:=&\frac{1}{\Gamma(1.5)} \left[1+\sum_{k=1}^N(-1)^k \binom{0.5}{k}\right],\\
B_k:=&\frac{(-1)^k}{\Gamma(0.5)} \binom{0.5}{k}.
\end{align*}

The results are shown in Figure \ref{figFDE}, for $N=2,4,6$. As we can see, as $N$ increases, the solution of the Cauchy problem \eqref{cauchy} is approaching the solution of the  FDE \eqref{FDEproblem}.

\begin{figure}[!htbp]
\centering
\includegraphics[width=10cm]{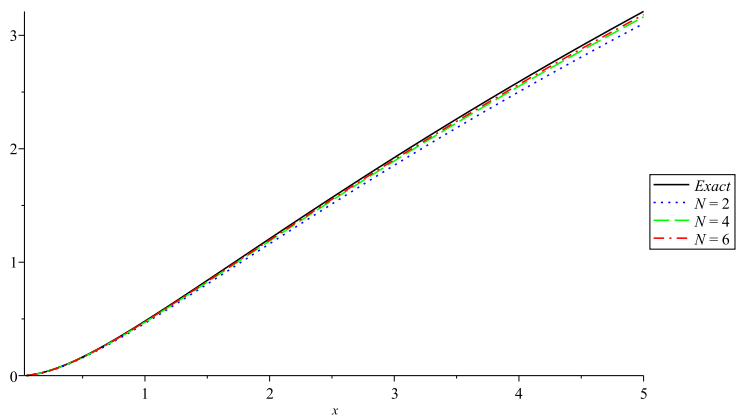}
\caption{Exact vs. numerical approximations for the FDE.}
\label{figFDE}
\end{figure}

\section{An application}
\label{sec:popul}

In this section we analise a model described by a fractional differential equation, and we show how fractional derivatives with respect to another function may be useful. The simplest model, but important historically, to describe the population growth  is the  Malthusian law, proposed in 1798 by the English economist Thomas Malthus \cite{Malthus}. It is given by the differential equation $N'(t)=\lambda N(t)$, where $\lambda$ is the population growth rate (equal to the difference between the birth and mortality rates), sometimes called Malthusian parameter, and $N(t)$ is the number of individuals in a population at time $t$. It is assumed that $\lambda$ is constant, and so if $N_0$ denotes the initial population size, then the solution to this Cauchy problem is the exponential function
\begin{equation}\label{popul-classical}N(t)=N_0\exp(\lambda t).\end{equation}
As is well know, fractional differential equations are sometimes more suitable to model natural situations, namely when there are constraints on the dynamics, when the conditions are not ideal, etc. For this, consider the FDE obtained from the Malthusian law of population growth, by replacing the first-order derivative by the $\p$-Caputo fractional derivative with respect to $\p$: ${^CD_{0+}^{\a,\p}} N(t)=\lambda N(t)$. From Lemma \ref{ex:E}, the solution of this FDE, together with the initial condition $N(0)=N_0$,  is the function
\begin{equation}\label{popul-fractional}N(t)=N_0E_\a(\lambda(\p(t)-\p(0))^\a).\end{equation}
If $\p(x)=x$, that is, if we are in the presence of the usual Caputo fractional derivative, then $N(t)=N_0E_\a(\lambda t^\a)$. This case was considered in \cite{Almeida2}, and it was proven that the FDE was more efficient in modelling the population growth than the ODE. In this work we go further, and exemplify that, when considering function $N$ as given by \eqref{popul-fractional}, we have a better accuracy on the model. The purpose is to determine the  best-fitting curve $N$ (that is, find the parameters $\lambda$ and $\a$) by minimizing the sum of the squares of the offsets of the points from the curve. We use the Maple \textit{NonlinearFit} command, that fits a nonlinear model in the model parameters of the data by minimizing the least-squares error: if $x_i$ are the observed values and $y_i$ the corresponding expected values, then we intend to minimize the sum $E:=\sum(x_i-y_i)^2$. The initial approximations for the routine were found, based on the data of the model.

The world population from 1910 until nowadays is given in Table \ref{tab1} \cite{UN}:
\begin{center}\begin{tabular}{|c|c|c|c|c|c|c|c|c|c|c|c|}
\hline
Year & 1910 & 1920 & 1930 & 1940 & 1950 & 1960 & 1970 & 1980 & 1990 & 2000 & 2010\\
\hline
Population &  1750 & 1860& 2070& 2300& 2520& 3020& 3700& 4440& 5270& 6060& 6790\\
\hline
\end{tabular}\captionof{table}{World population  (in millions) from year 1910 until 2010.}\label{tab1}
\end{center}

For the classical model \eqref{popul-classical}, with $x=0$ corresponding to the year 1910,  the best value is
$$\lambda \approx 0.13425 \quad \mbox{and the error is } \,E\approx6.75875\times 10^5.$$
Consider now the fractional model \eqref{popul-fractional}. We seek the values of the parameters $\lambda$ and $\a$ for which the error obtained is minimum. If we start by choosing the kernel $\p(x)=x$, that is, when we consider the Caputo fractional derivative ${^CD_{a+}^{\a}}N(t)$, the solution is given by
$$\lambda\approx0.085100, \quad \a\approx1.38935, \quad \mbox{with error }  E\approx 1.90896\times10^5.$$
Using this model, we see that the fractional model fits better with the data than the ordinary one.
Now, let us test other kernels and see if we can improve the efficiency of the model.
First, we consider $\p(x)=(x+1)^b$, with $b>0$. In Table \ref{tab2} we present some of the results:
\begin{center}\begin{tabular}{|c|c|c|c|}
\hline
$ b$ & $\lambda$ & $\a$ & Error\\
\hline
1.1 & 0.072991 &1.24897 & $ 2.13476\times10^5$\\
\hline
0.9 & 0.10613 & 1.55241 & $1.69172 \times 10^5$\\
\hline
0.8 & 0.14517 & 1.74137 & $1.48784\times10^5$\\
\hline
\end{tabular}\captionof{table}{Fractional models with kernel $\p(x) = (x + 1)^b$, for different values of $b$.}\label{tab2}
\end{center}

We can go further and determine the best value of $b$ for which the solution \eqref{popul-fractional}, involving the kernel $\p(x)=(x+1)^b$, with $b>0$, is closer to the data. In this case, we have the values
$$\lambda\approx0.26821, \quad \a\approx2.05784, \quad b\approx0.66734, \quad \mbox{with error } E\approx 1.26039\times10^5.$$
Other kinds of kernels could be considered. For example, for $\p(x)=\ln(x+1)$, we have the optimal values
$$\lambda\approx2.79881, \quad \a\approx4.44388 \quad \mbox{with error } E\approx 8.2257\times 10^4.$$ For $\p(x)=\sin(x/10)$, we have
$$\lambda\approx 5.35404, \quad \a\approx 1.93015 \quad \mbox{with error } E\approx 5.3735\times 10^4.$$

In Figure \ref{FigPopulacao}, we present the graphs of some of the fractional models, and the data given in Table \ref{tab1}.

\begin{figure}[!htbp]
\centering
\includegraphics[width=13.6cm]{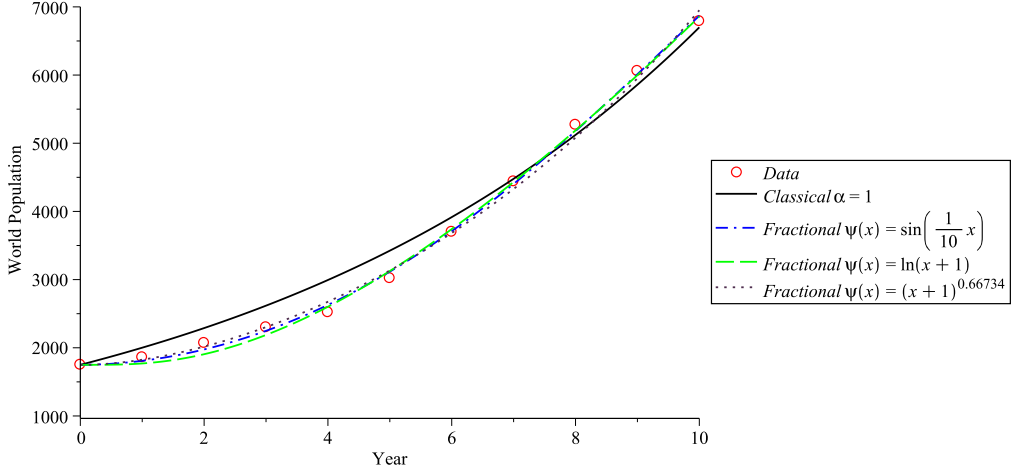}
\caption{World Population Growth model.}
\label{FigPopulacao}
\end{figure}

We can use our models to see how close they are to the projected population in 2015. In that year, the world population was estimated in 7350 millions. We compute  the error between the estimated value and the projected value by the following formula
$$E=\frac{|P(10.5)-7350|}{7350}\times100,$$
where $P$ is the one of the models that we obtained in this section. In Table \ref{tab3} we present the projected values, as well the error for each one.
\begin{center}\begin{tabular}{|c|c|c|}
\hline
Model & Projected & Error\\
\hline
Classical $\a=1$ & 7165 & 2.51382\\
\hline
Fractional $\p(x)=\sin(x/10)$ & 7294 & 0.75694\\
\hline
Fractional $\p(x)=\ln(x+1) $& 7302 & 0.65251\\
\hline
Fractional $\p(x)=(x+1)^{0.66734}$ & 7503 & 2.07646\\
\hline
\end{tabular}\captionof{table}{Projected population in 2015.}\label{tab3}
\end{center}

Now, we test our fractional models to a shorter period of time. If we consider the world population from the year 2000 until 2010, given by Table \ref{tab20},

\begin{center}\begin{tabular}{|c|c|c|c|c|c|c|c|c|c|c|c|}
\hline
Year & 2000& 2001 & 2002 & 2003 & 2004 & 2005 & 2006 & 2007 & 2008 & 2009 & 2010 \\
\hline
Population & 6060	&6165	&6242&	6319&	6396&	6473	&6551&	6630&	6709&	6788&	6790\\
\hline
\end{tabular}\captionof{table}{World population  (in millions) from year 2000 until 2010.}\label{tab20}
\end{center}
we get that for the classical model, the error is
$$E\approx 1.02223\times 10^4.$$
In Table \ref{tab21} we show the results when we model this new problem by a FDE, for the different kernels.
\begin{center}\begin{tabular}{|c|c|}
\hline
Model  & Error\\
\hline
Fractional $\p(x)=x$  & $2.98208\times 10^3$\\
\hline
Fractional $\p(x)=\sin(x/10)$  & $2.01593\times 10^3$\\
\hline
Fractional $\p(x)=\ln(x+1) $ & $3.70666\times 10^3$\\
\hline
Fractional $\p(x)=(x+1)^{0.56949}$  & $2.68650\times 10^3$\\
\hline
\end{tabular}\captionof{table}{Errors with respect to the FDE, from  year 2000 until 2010.}\label{tab21}
\end{center}
Thus, even when we consider short periods of time, the FDE is more accurate than the classical model.

\section*{Conclusion and future work}

The aim of this manuscript was to suggest a new fractional derivative with respect to another function, in the sense of Caputo derivative.
We derived some important properties of this new operator, and we established conditions for which the semigroup laws are valid. Numerical approximations have been provided, that allows us to solve any kind of problem, by considering a new one depending on ordinary derivatives only.
The new derivative was used to model the world population growth, and we saw that the choice of the fractional derivative is important for the efficiency of the method.

One interesting question is, given some experimental data, which is the kernel of the derivative that better describes the dynamics of the problem? One other possible generalization is to consider the fractional order as a function of time $\alpha(t)$, and determine the order that fits closer to the model. These and other questions will be treated in the future.


\section*{Acknowledgments}

The author is very grateful to Lu\'{i}s Machado, for a careful and thoughtful reading of the manuscript, and to three anonymous referees, for valuable remarks and comments that improved this paper.
Work supported by Portuguese funds through the CIDMA - Center for Research and Development in Mathematics and Applications, and the Portuguese Foundation for Science and Technology (FCT-Funda\c{c}\~ao para a Ci\^encia e a Tecnologia), within project UID/MAT/04106/2013.


\end{document}